\newtheorem{theorem}{Theorem}[section]
\newtheorem{proposition}[theorem]{Proposition}
\newtheorem{lemma}[theorem]{Lemma}
\newtheorem{corollary}[theorem]{Corollary}
\theoremstyle{definition}
\newtheorem{definition}[theorem]{Definition}
\newtheorem{assumption}[theorem]{Assumption}
\theoremstyle{remark}
\newtheorem{remark}[theorem]{Remark}
\DeclareMathOperator*{\minimize}{minimize}
\newcommand{\N}{\mathbb{N}}
\newcommand{\R}{\mathbb{R}}
\newcommand{\Z}{\mathbb{Z}}
\newcommand{\Ha}{\mathcal{H}}
\newcommand{\conv}{\operatorname{conv}}
\newcommand{\st}{\operatorname{subject\ to}}
\DeclareMathOperator*{\TV}{TV}
\DeclareMathOperator*{\pred}{pred}
\DeclareMathOperator*{\ared}{ared}
\DeclareMathOperator*{\BV}{BV}
\DeclareMathOperator*{\dvg}{div}
\newcommand*\dd{\mathop{}\!\mathrm{d}}
\newcommand{\weakto}{\rightharpoonup}
\newcommand{\pweakstarto}{\stackrel{p\ast}{\rightharpoonup}}
\newcommand{\calF}{\mathcal{F}}
\DeclarePairedDelimiterX\set[1]\{\}{#1}
\DeclarePairedDelimiterX\innerp[2](){#1,#2}
\DeclarePairedDelimiterX\dual[2]{\langle}{\rangle}{#1,#2}
\definecolor{darkgreen}{rgb}{0,0.5,0}
\newcommand{\symdiff}{\mathbin{\triangle}}
\title{Integer Optimal Control with Fractional Perimeter Regularization}
\author[1]{Harbir Antil \footnote{H. Antil is partially supported by NSF grant DMS-2110263, Air Force Office of Scientific Research (AFOSR) under Award NO: FA9550-22-1-0248, and Office of Naval Research (ONR) under Award NO: N00014-24-1-2147.}}
\author[2]{Paul Manns\footnote{Paul Manns acknowledges funding by Deutsche Forschungsgemeinschaft (DFG) under project no.\ 515118017.}}
\affil[1]{Department of Mathematical Sciences and Center for Mathematics and Artificial Intelligence, George Mason University, Fairfax VA 22030, USA, \textit{hantil@gmu.edu}}
\affil[2]{Faculty of Mathematics, TU Dortmund University, 44227 Dortmund, Germany, \textit{paul.manns@tu-dortmund.de}}
\begin{document}
\maketitle
\begin{abstract}
Motivated by many applications, optimal control problems with integer
controls have recently received a significant attention. Some state-of-the-art work uses perimeter-regularization to derive stationarity conditions and trust-region algorithms. However, the discretization is difficult in this case because the perimeter is concentrated on a set of dimension $d - 1$ for a domain of dimension $d$.

This article proposes a potential way to overcome this challenge by using the fractional nonlocal perimeter with 
fractional exponent $0<\alpha<1$. In this way, the boundary integrals in the perimeter regularization are replaced by
volume integrals. Besides establishing some non-trivial properties associated with this perimeter, a $\Gamma$-convergence result is derived. This result establishes convergence of minimizers of fractional perimeter-regularized problem, to the standard one, as the exponent $\alpha$ tends to 1. In addition, the stationarity results are derived and algorithmic convergence analysis is carried out for $\alpha \in (0.5,1)$ under an additional assumption on the gradient of the reduced objective. 

The theoretical results are supplemented by a preliminary computational experiment. We observe that the isotropy of the total variation may
be approximated by means of the fractional perimeter functional.
\end{abstract}

\section{Introduction}

Let $\Omega \subset \R^d$, $d \in \N$, be a bounded, polyhedral
domain.
Let $M \in \N$ and $\eta > 0$. This article is concerned
with solving minimization problems of the form
\begin{gather}\label{eq:p}
\begin{aligned}
\minimize_{w \in L^1(\Omega)} \enskip & J_\alpha(w) \coloneqq
F(w) + \eta R_\alpha(w)\\
 \st\enskip & w(x) \in W \coloneqq \{w_1,\ldots,w_M\} \subset \Z \text{ a.e.\ in } \Omega.
\end{aligned}
\tag{P$_\alpha$}
\end{gather}
Here, $F : L^1(\Omega) \to \R$, which we assume to be bounded
below, is the principle part of 
the objective that is due to the application and $R_\alpha$ is a 
regularizer that will provide desirable features of the 
solution to \eqref{eq:p}. The scalar $\alpha \in (0,1)$
parameterizes the regularizer
$R_\alpha$ and in turn \eqref{eq:p}. The specific role 
of $\alpha$ will become clear soon.
In order to clarify possible misunderstandings, since the term 
polyhedron may or may not imply convexity in the literature, 
the term polyhedron in this paper is used for sets whose 
boundaries are unions of convex polytopes.

Recent work
\cite{leyffer2022sequential,manns2023on,marko2022integer,severitt2023efficient} motivates and analyzes
the use of a total variation regularization of $w$,
which corresponds to a penalization of the perimeters of the level sets because 
\[
\TV(w) \le \sum_{i=1}^M|w_i| P(w^{-1}(\{w_i\});\R^d) \le 2 \max_i |w_i| \TV(w)
\]
if $\TV(w) < \infty$, where $P(A;B)$ denotes the perimeter of $A$ in
$B$ and $\TV(w)$ denotes the total variation of the function $w$, where we assume that $w$ is
extended by outside of $\Omega$ for feasible $w$ in \eqref{eq:p}, see \cite[Lemma 2.1]{manns2023on}.

We refer the reader to \cite{maggi2012sets}
for extensive information on sets of finite perimeter and
their properties. The key property of this regularization
that is exploited in the aforementioned publications is
the compactness it induces on the control space, specifically
bounded sequences of feasible points of \eqref{eq:p}, like
sequences produced by descent algorithms, have
subsequences that converge in $L^1(\Omega)$.

In the multi-dimensional case, $d \ge 2$, the 
discretization of the subproblems that are proposed in
\cite{leyffer2022sequential,manns2023on} is challenging.
The reason is that the arguments in the analysis of the 
finite difference or finite element discretizations, 
as are for example carried out in \cite{bartels2012total,chambolle2017accelerated,caillaud2023error}, 
use that $w$ can take values in $\R$ (or at least $\conv W$, the convex hull of $W$)
and that the superordinate minimization problem is convex. Both of these features are not available
in our setting. Moreover, any piecewise constant ansatz for $w$ on a fixed
decomposition of the domain into say polytopes restricts
the geometry of the level sets and therefore introduces
a potential gap between \eqref{eq:p} and its discretization.
This gap is due to the local structure of the perimeter
regularization. Specifically, the information on the perimeter
is concentrated on the (reduced) boundary of the level sets
and hence the discretization, which has finite $d-1$-dimensional
Hausdorff measure. The very recent work \cite{schiemann2024discretization}
provides a two-level discretization that can overcome this issue but
is computationally expensive and efficient implementations that make use of the
underlying structure as in \cite{manns2024discrete} are not available so far.

This motivates us to study regularization terms $R_\alpha$ that are close
to the perimeter regularization, provide compactness in $L^1(\Omega)$, but also have
non-local properties so that they might give a fruitful computational vantage point
because they allow to replace the difficult localized boundary
integrals: specifically, $R_\alpha$ is given by means of a double volume integral so that
its numerical approximation can be improved by improving the quadrature of the volume integrals.
Let $w$ be feasible for \eqref{eq:p} and let $E_i \coloneqq w^{-1}(\{w_i\})$ for
$i \in \{1,\ldots,M\}$. Specifically, we consider
\[ R_\alpha(w) \coloneqq (1 - \alpha) \sum_{i=1}^M |w_i|P_\alpha(E_i) \]
for $\alpha \in (0,1)$, where $P_\alpha(E)$ is the
so-called \emph{fractional perimeter}
\begin{gather}\label{eq:fractional_perimeter}
P_\alpha(E) 
\coloneqq \int_{\R^d} \int_{\R^d} \frac{|\chi_E(x) - \chi_E(y)|}{|x - y|^{d + s}} \dd x \dd y,
\end{gather}
introduced and analyzed in \cite{caffarelli2010nonlocal,visintin1991generalized}
and $\chi_E$ denotes the $\{0,1\}$-valued indicator function of $E \subset \R^d$,
see also \cite{comi2019distributional,cozzi2017regularity,HAntil_HDiaz_TJing_ASchikorra_2024a}.
The limit problem that we approximate with \eqref{eq:p} is the perimeter-regularized
integer optimal control problem
\begin{gather}\label{eq:p0}
\begin{aligned}
\minimize_{w \in L^1(\Omega)} \enskip &J(w) \coloneqq
F(w) + \eta R(w)\\
\st\enskip & w(x) \in W \coloneqq \{w_1,\ldots,w_M\} \subset \Z \text{ a.e.\ in } \Omega,
\end{aligned}
\tag{P}
\end{gather}
where $R(w) = \omega_{d-1}\sum_{i=1}^M |w_i|P(E_i;\R^d)$ and $\omega_{d-1}$ denotes the
volume of the unit ball in $\R^{d-1}$. In particular, this means that our optimization
variable $w$ can be modified only on the domain $\Omega$ but the regularizer 
in \eqref{eq:p}, \eqref{eq:p0} takes into account the boundary of $\Omega$. In other words,
$w$ is implicitly extended by zero outside of $\Omega$ and the jumps across $\partial \Omega$
are counted.
We note that we are not the first ones to make steps in the direction of computationally
exploiting these properties of the fractional perimeter and particularly point to the
works \cite{dipierro2016nonlocal,borthagaray2023fractional,HAntil_HDiaz_TJing_ASchikorra_2024a}.

\subsection{Contributions}
We make the following contributions. The existence of solutions to \eqref{eq:p} is---in our opinion---not 
immediate since we are not aware of a Banach--Alaoglu theorem for the Sobolev space $W^{\alpha,1}(\R^d)$
that induces the fractional perimeter as defined in \eqref{eq:fractional_perimeter}.
Thus we prove it by means of an argument that exploits the specific structure of our feasible set that only consists
of $W$-valued functions, $|W| < \infty$.

We show compactness in $L^1(\Omega)$ and $\Gamma$-convergence for $\alpha \nearrow 1$. For all
$\alpha \in (0.5,1)$, we prove stationarity conditions and asymptotics of a trust-region algorithm
parallel to \cite{manns2023on}. In order to achieve this, we currently require the regularity
assumption $\nabla F(w^n) \in C^2(\bar{\Omega})$ for the iterates $w^n$ produced by the algorithm,
which is quite strong compared to the regularity $\nabla F(w^n) \in C(\bar{\Omega})$ that
is required for the perimeter-regularized case, see \cite{manns2023on}.

We also provide a preliminary and qualitative computational experiment, in which we apply
the trust-region algorithm for the choices $\alpha = 0.5$ and $\alpha = 0.9$
as well as for $R(w) = \omega_{d-1}\sum_{i=1}^M |w_i|P(E_i;\R^d)$.
The geometric restriction induced by the piecewise constant ansatz for
our control functions is visible in the limiting case but this behavior
is alleviated for $\alpha = 0.5$ and $\alpha = 0.9$.
Unfortunately, the subproblem solves in the trust-region algorithm
are extremely expensive even for a relatively coarse discretization.
Moreover, since computationally tractable discretizations of the subproblems for the limit case are not available
so far, it is difficult to interpret and compare the results.
Therefore, we emphasize that much more work is needed from a computational point of view
in order to provide more efficient discretization and solution algorithms.

\subsection{Structure of the remainder}
After introducing some notation and the necessary concepts regarding modes of convergence and
local variations of the elements of the feasible set in \cref{sec:notation}, we provide
the existence of solutions in \cref{sec:existence}. Compactness and $\Gamma$-convergence
are analyzed in \cref{sec:gamma}.
The analysis of local minimizers and the trust-region algorithm and its asymptotics
are provided and analyzed in \cref{sec:stat_and_tr}.
We provide a computational experiment and discuss its implications in \cref{sec:computational}.

\section{Notation and auxiliary results}\label{sec:notation}

If not indicated otherwise, we assume that $\alpha \in (0,1)$
is fixed but arbitrary in the whole article without further
mention. We denote the complement of a set $A \subset \R^d$
by $A^c \coloneqq \R^d\setminus A$. We denote the symmetric difference between
$A$ and a further set $B \subset \R^d$ by $A \symdiff B$.
We will frequently use the following reformulation of \eqref{eq:fractional_perimeter}.
Let $E \subset \R^d$, then $P_\alpha(E)$ satisfies
\begin{gather}\label{eq:frac_perim_as_one_double_integral}
P_\alpha(E) 
= 2 \int_E \int_{E^c}
  \frac{1}{|x - y|^{d + \alpha}} \dd x \dd y.
\end{gather}
We immediately obtain that the function $P_\alpha$ is submodular, see also (2.1)
in \cite{chambolle2013minimizing}.
\begin{lemma}\label{lem:submodularity}
Let $\alpha \in (0,1)$. Let $E$, $F$ be measurable subsets of $\Omega$. Then
\begin{gather}\label{eq:union_intersection_inequality}
P_\alpha(E \cap F) + P_\alpha(E \cup F) \le P_\alpha(E) + P_\alpha(F) 
\end{gather}   
and
\begin{gather}\label{eq:set_subtraction_inequality}
P_\alpha(E \setminus F) + P_\alpha(F \setminus E) 
\le P_\alpha(E) + P_\alpha(F) \, .
\end{gather}   
\end{lemma}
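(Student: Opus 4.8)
The plan is to reduce both inequalities to the bilinearity of a single nonnegative interaction form. For measurable $A, B \subseteq \R^d$ set
\[
  L(A,B) \coloneqq \int_A \int_B \frac{1}{\abs{x-y}^{d+\alpha}} \dd x \dd y \in [0,\infty],
\]
so that \eqref{eq:frac_perim_as_one_double_integral} reads $P_\alpha(G) = 2\, L(G, G^c)$ for every measurable $G$. Because the kernel is nonnegative, Tonelli's theorem guarantees that $L$ is well defined in $[0,\infty]$, is symmetric, and is additive in each slot over disjoint unions: $L(A_1 \cup A_2, B) = L(A_1,B) + L(A_2,B)$ whenever $A_1 \cap A_2 = \emptyset$, and likewise in the second argument.

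First I would partition $\R^d$ into the four pairwise disjoint measurable pieces
\[
  A \coloneqq E \cap F, \quad B \coloneqq E \setminus F, \quad C \coloneqq F \setminus E, \quad D \coloneqq (E \cup F)^c ,
\]
noting that $D$ absorbs all of $\R^d \setminus \Omega$ so the four sets still cover $\R^d$. Writing $E = A \cup B$, $E^c = C \cup D$, and the analogous decompositions of $F$, $E \cap F$, $E \cup F$, $E \setminus F$, $F \setminus E$ and their complements, additivity of $L$ expresses each of the six perimeters in the statement through the six pairwise interactions $L(A,B),\dots,L(C,D)$; for instance $P_\alpha(E) = 2\bigl(L(A,C)+L(A,D)+L(B,C)+L(B,D)\bigr)$ and $P_\alpha(E\cap F) = 2\bigl(L(A,B)+L(A,C)+L(A,D)\bigr)$.

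Collecting terms then yields the two exact identities
\[
  P_\alpha(E)+P_\alpha(F) = P_\alpha(E\cap F)+P_\alpha(E\cup F)+4\,L(B,C)
\]
and
\[
  P_\alpha(E)+P_\alpha(F) = P_\alpha(E\setminus F)+P_\alpha(F\setminus E)+4\,L(A,D),
\]
each side reducing to the same combination $2L(A,B)+2L(A,C)+4L(A,D)+4L(B,C)+2L(B,D)+2L(C,D)$. Since $L(B,C) \ge 0$ and $L(A,D) \ge 0$, the inequalities \eqref{eq:union_intersection_inequality} and \eqref{eq:set_subtraction_inequality} follow immediately. I do not expect any genuine analytic difficulty here; the only point requiring care is that individual interactions may be infinite because of the diagonal singularity of the kernel. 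I would therefore phrase the two relations as additive identities between nonnegative quantities in $[0,\infty]$ and never subtract, so that the rearrangement of sums stays valid in the extended reals irrespective of finiteness. This bookkeeping is the main obstacle, and the decomposition into $A,B,C,D$ is precisely what makes the surplus terms $4\,L(B,C)$ and $4\,L(A,D)$ manifestly nonnegative.
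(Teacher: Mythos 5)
Your proof is correct and follows essentially the same route as the paper: both start from the reformulation \eqref{eq:frac_perim_as_one_double_integral} and decompose the double integrals over the partition $E\cap F$, $E\setminus F$, $F\setminus E$, $(E\cup F)^c$, with the inequalities coming from discarding the nonnegative interactions $L(E\setminus F, F\setminus E)$ and $L(E\cap F, (E\cup F)^c)$ respectively. Your version merely packages the paper's chain of inequalities as exact identities with explicit surplus terms, which is a tidy (and slightly sharper) presentation of the same argument.
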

\begin{proof}
We consider the formulation of $P_\alpha(E)$ from \eqref{eq:frac_perim_as_one_double_integral} and
define $g(x,y) \coloneqq |x - y|^{-(d + \alpha)}$ for $x$, $y\in \R^d$.
Then inserting the definitions, elementary computations, and the positivity of $g$ yield
\begin{align*}
\frac{1}{2}P_\alpha(E \cap F) +
\frac{1}{2}P_\alpha(E \cup F)
&= \int_{E \cap F} \int_{(E \cap F)^c}g + \int_{E \cup F} \int_{(E \cup F)^c} g \\
&=  \int_{E \cap F} \int_{(E \setminus F) \cup (F \setminus E)}g  + \int_{E} \int_{(E \cup F)^c} g + \int_F \int_{(E \cup F)^c} g  \\
&\le  \int_{E \cap F} \int_{E \setminus F}g  + \int_{E} \int_{E^c} g + \int_F \int_{(E \cup F)^c} g  \\
&\le \int_{E} \int_{E^c} g + \int_F \int_{F^c} g  = \frac{1}{2}P_\alpha(E; \Omega) + \frac{1}{2}P_\alpha(F; \Omega),
\end{align*}
which proves \eqref{eq:union_intersection_inequality}. In order to see
\eqref{eq:set_subtraction_inequality}, we consider
\begin{align*}
\frac{1}{2}P_\alpha(E \setminus F) +
\frac{1}{2}P_\alpha(F \setminus E)
&= \int_{E \setminus F} \int_{(E \setminus F)^c}g 
 + \int_{F \setminus E} \int_{(F \setminus E)^c} g \\
&= \int_E\int_{E^c}g + \int_{E \setminus F}\int_{E\cap F}g - \int_{E\cap F}\int_{E^c} g\\
&\hphantom{=} + \int_F\int_{F^c}g + \int_{F \setminus E}\int_{E \cap F}g - \int_{E \cap F}\int_{F^c} g\\
&\le \frac{1}{2}P_\alpha(E) + \frac{1}{2}P_\alpha(F).
\end{align*}
The proof is complete.
\end{proof}

We will sometimes switch between the view of a function $w$ that
is feasible for \eqref{eq:p} and the partition of $\Omega$
that is given by its level sets. If there is no ambiguity,
we will denote the level sets by $E_i \coloneqq w^{-1}(\{w_i\})$,
$i \in \{1,\ldots,M\}$, without further mention.
Let $\lambda$ denote the Lebesgue measure on $\R^d$.

Let $\alpha \in (0,1)$. Then $\nabla^\alpha f$, defined as
\[ \nabla^\alpha f(x) = 
\int_{\R^d}
\frac{(y - x) \cdot (f(y) - f(x))}{|y - x|^{d + 1 + \alpha}}\dd y
\]
for $x \in \R^d$ is the so-called fractional gradient \cite{comi2019distributional,vsilhavy2020fractional,HAntil_HDiaz_TJing_ASchikorra_2024a}
for all $f$, where the integrand of this integral is an integrable function.

We denote the feasible set by
\[ \calF \coloneqq \{ w \in L^1(\Omega)\,|\, w(x) \in W \text{ for a.e.\ } x \in \Omega \}.
\]

\subsection{Modes of convergence}
The Gagliardo seminorm of the Sobolev space $W^{\alpha,1}(\R^d)$, see \cite{di2012hitchhikers},
with fractional order of differentiability $\alpha \in (0,1)$
corresponds to the fractional perimeter $P_\alpha$ as defined above. To the best of the authors' knowledge, this space
does not admit a predual so that, in contrast to $\BV(\R^d)$, there is no weak-$^*$ topology that gives existence
of limits for bounded subsequences. However, this property can be recovered when restricting to our feasible set
$\calF$ of a.e.\ $W$-valued integrable functions. We therefore refer to this property as pseudo-weakly-$^*$ in
this article. In Banach spaces that are not uniformly convex, that is they do not have a norm that satisfies a uniform midpoint
convexity property, having weak-$^*$ convergence or weak convergence together with convergence
of the values of the norm does not necessarily imply convergence in norm. Consequently, there is sometimes an
important mode of convergence for this situation like so-called strict convergence in $\BV(\Omega)$.
In an analogy to this, we define strict convergence for our setting as convergence in $L^1(\Omega)$ in combination
with convergence of the regularizer, which of course implies pseudo-weak-$^*$ convergence.
\begin{definition}
Let $\alpha \in (0,1)$ be fixed.
We say that $\{w^n\}_n \subset \calF$ converges to $w \in \calF$ pseudo-weakly-$^*$ in $\calF$ 
and write $w^n \pweakstarto w$ if
\begin{gather*}
\begin{aligned}
\sum_{i=1}^M w_i \chi_{E_i^n} = w^n &\to w = \sum_{i=1}^M w_i \chi_{E_i} \text{ in } L^1(\Omega)\text{ and}\\
\sup_{n \in \N} P_\alpha(E_i^n) &< \infty \text{ for all } i \in \{1,\ldots,M\}.
\end{aligned}
\end{gather*}
We say that $\{w^n\}_n \subset \calF$ converges to $w \in \calF$ strictly in $\calF$ if
\begin{gather*}
\begin{aligned}
\sum_{i=1}^M w_i \chi_{E_i^n} = w^n &\to w = \sum_{i=1}^M w_i \chi_{E_i} \text{ in } L^1(\Omega)\text{ and}\\
R_\alpha(w^n) &\to R_\alpha(w).
\end{aligned}
\end{gather*}
\end{definition}
We obtain lower semicontinuity with respect to convergence in $L^1(\Omega)$ and in turn also for our regularizer,
which we briefly show below.
\begin{lemma}\label{lem:Ralpha_lsc}
Let $\alpha \in (0,1)$. Let $\chi_{E^n} \to \chi_E$ in $L^1(\Omega)$ for measurable sets 
$E^n$, $E \subset \Omega$. Then
\[ P_\alpha(E) \le \liminf_{n\to\infty} P_\alpha(E^n) . \]
In particular, for $w^n \to w$ in $L^1(\Omega)$ and $w^n$, $w \in \calF$ we obtain
\[ R_\alpha(w) \le \liminf_{n\to\infty} R_\alpha(w^n). \]
\end{lemma}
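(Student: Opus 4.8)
The plan is to prove the first assertion---lower semicontinuity of $P_\alpha$---directly by Fatou's lemma on the product space $\R^d \times \R^d$, and then to bootstrap the statement for $R_\alpha$ from it. First I would observe that, since $E^n, E \subset \Omega$, the indicators $\chi_{E^n}$ and $\chi_E$ vanish outside $\Omega$, so convergence in $L^1(\Omega)$ coincides with convergence in $L^1(\R^d)$; this lets me work on all of $\R^d$, where the fractional perimeter lives. If $\liminf_{n\to\infty} P_\alpha(E^n) = +\infty$ the claim is trivial, so I may pass to a subsequence $(E^{n_k})_k$ along which $P_\alpha(E^{n_k})$ converges to the (finite) value $\liminf_{n\to\infty} P_\alpha(E^n)$.

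Next I would upgrade the $L^1$-convergence to pointwise convergence: along a further subsequence (not relabeled) one has $\chi_{E^{n_k}}(x) \to \chi_E(x)$ for a.e.\ $x \in \R^d$. By Fubini, the set of pairs $(x,y)$ for which both $\chi_{E^{n_k}}(x) \to \chi_E(x)$ and $\chi_{E^{n_k}}(y) \to \chi_E(y)$ hold has full measure in $\R^d \times \R^d$, so the nonnegative integrands
\[
g_k(x,y) \coloneqq \frac{|\chi_{E^{n_k}}(x) - \chi_{E^{n_k}}(y)|}{|x-y|^{d+\alpha}}
\]
converge a.e.\ to the integrand defining $P_\alpha(E)$. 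Fatou's lemma then yields $P_\alpha(E) \le \liminf_{k\to\infty} \int_{\R^d}\int_{\R^d} g_k \dd x \dd y = \liminf_{n\to\infty} P_\alpha(E^n)$, since the chosen subsequence realizes the $\liminf$. Note that nonnegativity of the kernel makes Fatou applicable even when some $P_\alpha(E^n)$ are infinite.

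For the statement on $R_\alpha$, I would first show that $w^n \to w$ in $L^1(\Omega)$ forces $\chi_{E_i^n} \to \chi_{E_i}$ in $L^1(\Omega)$ for each level set. The key is that $W$ is finite, so $\delta \coloneqq \min_{i\ne j}|w_i - w_j| > 0$ and $|w^n - w| \ge \delta$ wherever $w^n \ne w$; hence $\lambda(\{w^n \ne w\}) \le \delta^{-1}\|w^n - w\|_{L^1(\Omega)} \to 0$. Since $E_i^n \symdiff E_i \subset \{w^n \ne w\}$ for each $i$, we obtain $\|\chi_{E_i^n} - \chi_{E_i}\|_{L^1(\Omega)} = \lambda(E_i^n \symdiff E_i) \to 0$. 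Applying the first part to each $i$ and using the superadditivity of $\liminf$ over the finite sum with nonnegative coefficients $|w_i|$,
\[
\liminf_{n\to\infty} R_\alpha(w^n) \ge (1-\alpha)\sum_{i=1}^M |w_i| \liminf_{n\to\infty} P_\alpha(E_i^n) \ge (1-\alpha)\sum_{i=1}^M |w_i| P_\alpha(E_i) = R_\alpha(w),
\]
which is the desired inequality.

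The main obstacle is the interplay of the two subsequence extractions in the Fatou step---realizing the $\liminf$ of $P_\alpha(E^n)$ while simultaneously securing a.e.\ convergence of the indicators---together with the Fubini argument that promotes a.e.\ convergence in $x$ and in $y$ separately to a.e.\ convergence of the symmetric integrand on the product space. Everything else is bookkeeping: the discreteness of $W$ makes the reduction to level-set indicators elementary, and the superadditivity of $\liminf$ handles the finite sum.
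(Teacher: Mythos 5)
Your proof is correct, but it takes a different route from the paper's. The paper exploits the identity $|\chi_E(x)-\chi_E(y)| = |\chi_E(x)-\chi_E(y)|^2$ to rewrite $P_\alpha(E)$ as the squared Gagliardo seminorm of the Hilbert space $W^{s,2}(\R^d)$ with $s=\alpha/2$, upgrades $\chi_{E^n}\to\chi_E$ to weak convergence in that space along bounded subsequences, and then invokes weak lower semicontinuity of the seminorm. You instead argue directly on the product space: extract a subsequence realizing the $\liminf$, a further subsequence converging a.e., note via Fubini that the symmetric integrand converges a.e.\ on $\R^d\times\R^d$, and apply Fatou to the nonnegative integrands. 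Your argument is more elementary and self-contained---it needs no Hilbert-space structure, no weak compactness, and no appeal to convexity---and it handles the case of some $P_\alpha(E^n)=\infty$ without special care. The paper's route is shorter on the page but leans on the standard fact that a convex, strongly continuous functional is weakly lower semicontinuous. Your treatment of the second claim (reduction to level sets via the positive minimal gap in the finite set $W$, then superadditivity of $\liminf$) is also sound and is slightly more explicit than the paper, which simply says the second claim follows from the first and the definition of $R_\alpha$.
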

\begin{proof}
Clearly, the first claim holds if $\liminf_{n \to \infty} P_\alpha(E^n;\Omega) = \infty$. If this is not the
case, we observe that for $s = 0.5 \alpha$ and by means of $|\chi_E(x) - \chi_E(y)| = |\chi_E(x) - \chi_E(y)|^2$
it holds that
\[ P_\alpha(E) 
   \underset{\eqref{eq:frac_perim_as_one_double_integral}}=
   \int_{\R^d} \int_{\R^d}
   \frac{|\chi_E(x) - \chi_E(y)|^2}{|x - y|^{d + 2s}}
   \dd x \dd y,
\]
where we have extended $\chi_E$ with the value zero outside of $\Omega$ and where the right hand side is the
squared Gagliardo seminorm of the Hilbert space $W^{s,2}(\R^d)$, see, for example, \cite{di2012hitchhikers}.
Moreover, the boundedness of the characteristic functions gives $\chi_{E^n} \to \chi_E$ in $L^2(\Omega)$ too.
We infer that all subsequences $\chi_{E^{n_k}}$ such that $P_\alpha(E^{n_k})$ is bounded converge
weakly to $\chi_E$. Then the weak lower semicontinuity
of the seminorm of $W^{s,2}(\R^d)$  yields the first claim.
The second claim follows directly from the first and the definition of $R_\alpha$.
\end{proof}

\subsection{Local variations}
We follow the ideas presented in \cite{manns2023on} in order to 
derive stationarity conditions for \eqref{eq:p} and obtain
a corresponding sufficient decrease that in turn allows to prove
convergence of a sufficient decrease condition.
Both rely on the analysis of a perturbation of the partition
$E_1$, $\ldots$, $E_M$. We introduce such perturbations by
means of so-called \emph{local variations}, where we follow
\cite{manns2023on}, which in turn is based on
\cite{maggi2012sets}.
\begin{definition}[Definition 3.1 in \cite{manns2023on}]
\begin{enumerate}[label=\emph{(\alph*)}]
\item A \emph{one-parameter family of diffeomorphisms of $\R^d$}
is a smooth function $f:  (-\varepsilon,\varepsilon)  \times \R^d\to\R^d$
for some $\varepsilon > 0$ such that for all $t \in (-\varepsilon,\varepsilon)$,
the function $f_t(\cdot) \coloneqq f(t,\cdot) : \R^d \to\R^d$ is a diffeomorphism.
\item Let $A \subset \R^d$ be open. Then the family
$(f_t)_{t\in(-\varepsilon,\varepsilon)}$ is a
\emph{local variation in $A$} if in addition to (a) we have
$f_0(x) = x$ for all $x \in \R^d$ and there is a compact set $K \subset A$
such that $\{x \in \R^d\,|\,f_t(x) \neq x\} \subset K$
for all  $t \in (-\varepsilon,\varepsilon)$.
\item For a local variation, we define its \emph{initial velocity}:
$\phi(x) \coloneqq \frac{\partial f}{\partial t}(0, x)$ for $x \in \R^d$.
\end{enumerate}
\end{definition}
\begin{proposition}[Proposition 3.2 in \cite{manns2023on}]\label{prp:elementary_local_variation_properties}
Let $\phi \in C_c^\infty(\Omega,\R^d)$. Let
$f_t \coloneqq I + t\phi$ for $t \in \R$.
Then $(f_t)_{t\in(-\varepsilon,\varepsilon)}$
is a local variation in $\Omega$ with initial velocity $\phi$
for some $\varepsilon > 0$.
\end{proposition}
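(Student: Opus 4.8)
The plan is to verify the three defining conditions of a local variation in turn, with essentially all of the work concentrated in the diffeomorphism property of condition (a); conditions (b) and (c) are immediate. Since $\phi \in C_c^\infty(\Omega,\R^d)$, the map $f(t,x) = x + t\phi(x)$ is jointly smooth in $(t,x)$ and satisfies $f_0 = I$. The set $K \coloneqq \supp \phi$ is a compact subset of $\Omega$ by assumption, and for $x \notin K$ we have $\phi(x) = 0$, hence $f_t(x) = x$; this gives $\{x \in \R^d : f_t(x) \neq x\} \subset K$ for every $t$, which is condition (b). Differentiating $f$ at $t = 0$ yields $\frac{\partial f}{\partial t}(0,x) = \phi(x)$, so the initial velocity is $\phi$, which is condition (c).

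The remaining task is to produce $\varepsilon > 0$ such that $f_t$ is a diffeomorphism of $\R^d$ for $|t| < \varepsilon$. Because $\phi$ is smooth with compact support, $L \coloneqq \|D\phi\|_{\infty} < \infty$; if $L = 0$, then $\phi$ is constant and hence zero, $f_t = I$ is trivially a diffeomorphism for all $t$, and any $\varepsilon > 0$ works. Assume $L > 0$ and set $\varepsilon \coloneqq 1/L$. I would establish bijectivity of $f_t$ for $|t| < \varepsilon$ by a contraction argument: for fixed $y \in \R^d$ the equation $f_t(x) = y$ is equivalent to the fixed-point equation $x = y - t\phi(x) =: T_y(x)$, and $T_y$ is a contraction since $|T_y(x) - T_y(x')| \le |t|\, L\, |x - x'|$ with $|t| L < 1$. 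The Banach fixed-point theorem then yields a unique solution, which gives surjectivity and injectivity of $f_t$ simultaneously.

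It finally remains to check that $f_t^{-1}$ is smooth. Here the key observation is that the Jacobian $Df_t(x) = I + t\, D\phi(x)$ is invertible at every $x$ for $|t| < \varepsilon$, because $\|t\, D\phi(x)\| \le |t| L < 1$ exhibits $I + t\, D\phi(x)$ as a perturbation of the identity with a convergent Neumann series. Combining global bijectivity with the everywhere-invertible Jacobian, the inverse function theorem applied at each point shows that $f_t^{-1}$ is smooth, so $f_t$ is a diffeomorphism and condition (a) holds. I expect the only genuinely non-trivial point to be the global invertibility, i.e.\ the surjectivity of $f_t$ onto all of $\R^d$; casting it as the contraction above is what keeps the argument clean and avoids invoking degree theory or a properness-plus-connectedness argument for the image. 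Taking the same $\varepsilon = 1/L$ throughout then simultaneously validates all three conditions and completes the construction.
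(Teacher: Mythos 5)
Your proof is correct and complete. The paper gives no argument of its own here---it simply defers to Proposition 3.2 of \cite{manns2023on}---and your contraction-mapping argument for global bijectivity combined with the Neumann-series invertibility of $Df_t = I + t\,D\phi$ and the inverse function theorem is exactly the standard proof underlying that citation, so nothing needs to be changed.
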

\begin{proof}
We refer the reader to Proposition 3.2 in \cite{manns2023on}.
\end{proof}
Let $w = \sum_{i=1}^M w_i \chi_{E_i}$ and
$(f_t)_{t \in (-\varepsilon,\varepsilon)}$ be a local
variation in $\Omega$. Then we define the functions
\[ f_t^{\#}w \coloneqq \sum_{i=1}^M w_i \chi_{f_t(E_i)} \]
for all $t \in (-\varepsilon,\varepsilon)$.
For the results below, we consider an arbitrary but
fixed local variation $(f_t)_{t \in (-\varepsilon,\varepsilon)}$
in $\Omega$ with initial velocity
$\phi \in C_c^\infty(\Omega,\R^d)$.
\begin{proposition}
Let $\{E_1,\ldots,E_M\}$ be a partition of $\Omega$.
Then the transformed sets $\{f_t(E_1),\ldots,f_t(E_M)\}$ are a
partition of $\Omega$ for all $t \in (-\varepsilon,\varepsilon)$.
\end{proposition}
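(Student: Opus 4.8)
The plan is to exploit that each $f_t$ is a bijection of $\R^d$ (being a diffeomorphism) together with the fact that it coincides with the identity outside a compact subset of $\Omega$. Recall that a collection $\{E_1,\ldots,E_M\}$ is a partition of $\Omega$ precisely when the sets are pairwise disjoint and their union is $\Omega$. Both of these properties transform well under an injective, surjective map, so the proof reduces to two elementary observations and one substantive identity.

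First I would verify that disjointness is preserved. Since $f_t$ is injective, for $i \neq j$ we have $f_t(E_i) \cap f_t(E_j) = f_t(E_i \cap E_j) = f_t(\emptyset) = \emptyset$, so the transformed sets remain pairwise disjoint. Likewise, because $f_t$ is a bijection of $\R^d$, taking images commutes with unions, which gives $\bigcup_{i=1}^M f_t(E_i) = f_t\left(\bigcup_{i=1}^M E_i\right) = f_t(\Omega)$.

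The only substantive step — and the main, if mild, obstacle — is to show $f_t(\Omega) = \Omega$. This is where the local-variation structure enters: by definition there is a compact set $K \subset \Omega$ with $f_t = \mathrm{id}$ on $\R^d \setminus K$. Since $K \subset \Omega$ and $\Omega$ is open, the complement $\Omega^c$ is contained in $\R^d \setminus K$, so $f_t$ restricts to the identity on $\Omega^c$ and hence $f_t(\Omega^c) = \Omega^c$. Using that $f_t$ is a bijection of $\R^d$, the image of the complement is the complement of the image, so that $f_t(\Omega) = \R^d \setminus f_t(\Omega^c) = \R^d \setminus \Omega^c = \Omega$. Combining this with the previous paragraph yields $\bigcup_{i=1}^M f_t(E_i) = \Omega$ together with pairwise disjointness, and in particular $f_t(E_i) \subset f_t(\Omega) = \Omega$ for each $i$; this is exactly the assertion that $\{f_t(E_1),\ldots,f_t(E_M)\}$ is a partition of $\Omega$.

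If one reads the partition property only up to Lebesgue-null sets, I would additionally remark that each $f_t$, being a diffeomorphism, is locally Lipschitz and therefore maps null sets to null sets (and its inverse does as well), so any negligible overlaps among the $E_i$ and any negligible defect in their union are carried over to the transformed sets without affecting the conclusion.
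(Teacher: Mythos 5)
Your proof is correct and takes essentially the same route as the paper, which simply observes that the claim follows because the $f_t$ are diffeomorphisms (deferring details to the cited reference); you have filled in exactly the steps that observation compresses, namely that injectivity preserves disjointness, images commute with unions, and the local-variation property ($f_t = \mathrm{id}$ outside a compact $K \subset \Omega$) forces $f_t(\Omega) = \Omega$. The closing remark about null sets is a sensible addition given that partitions of $\Omega$ into measurable sets are understood up to Lebesgue-null sets in this context.
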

\begin{proof}
This follows because the $f_t$ are diffeomorphisms,
see also \cite[\S3]{manns2023on}.
\end{proof}
We mention that local variations also allow for a notion of stability, see Definition 1.6 in \cite{cinti2019quantitative}.
\begin{proposition}[Variation of the fractional perimeter]\label{prp:variation_fractional_perimeter}
Let $\phi \in C_c^\infty(\Omega;\R^d)$ and let $E \subset \Omega$ satisfy $P_\alpha(E) < \infty$.
Then, 
\begin{gather*}
P_\alpha(f_t(E)) - P_\alpha(E) =
t L_\alpha(E,\phi)
   + o(t) ,
\end{gather*}
holds with the definitions
\[  
L_\alpha(E,\phi) \coloneqq 
    \int_E \int_{E^c}
    \frac{C(x,y)}{|x - y|^{d + \alpha}} \dd x \dd y  < \infty
\]
and
\[ C(x,y)
   \coloneqq
   \dvg \phi(x) 
   + \dvg \phi(y)
   - (d + \alpha)\frac{(x - y)\cdot (\phi(x) - \phi(y)}{|x - y|^2}. 
\]
\end{proposition}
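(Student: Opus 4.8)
The plan is to reduce the perturbed perimeter to an integral over the \emph{fixed} set $E\times E^c$ by a change of variables, and then to differentiate under the integral sign at $t=0$ via dominated convergence. Starting from the representation \eqref{eq:frac_perim_as_one_double_integral} and using that $f_t:\R^d\to\R^d$ is a diffeomorphism with $f_t(E)^c=f_t(E^c)$, I would substitute $x=f_t(\xi)$, $y=f_t(\eta)$ to obtain
\[
P_\alpha(f_t(E)) = 2\int_E\int_{E^c}\frac{J_t(\xi)\,J_t(\eta)}{|f_t(\xi)-f_t(\eta)|^{d+\alpha}}\dd\xi\dd\eta,
\qquad J_t(\zeta)\coloneqq\abs{\det Df_t(\zeta)},
\]
which is an integral over the $t$-independent domain $E\times E^c$. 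The two ingredients for the expansion are then $Df_t=I+tD\phi$, giving $J_t(\zeta)=1+t\dvg\phi(\zeta)+o(t)$ for small $t$ (where $J_t>0$, so the modulus is inactive), and $f_t(\xi)-f_t(\eta)=(\xi-\eta)+t(\phi(\xi)-\phi(\eta))$, giving $|f_t(\xi)-f_t(\eta)|^{-(d+\alpha)}=|\xi-\eta|^{-(d+\alpha)}\bigl(1-(d+\alpha)t\tfrac{(\xi-\eta)\cdot(\phi(\xi)-\phi(\eta))}{|\xi-\eta|^2}+o(t)\bigr)$.

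Multiplying these expansions, the coefficient of $t$ in the integrand is exactly $C(\xi,\eta)|\xi-\eta|^{-(d+\alpha)}$, so the formal first variation is the asserted $L_\alpha(E,\phi)$ up to the multiplicative constant carried over from \eqref{eq:frac_perim_as_one_double_integral}. Before justifying the passage to the limit I would establish finiteness of $L_\alpha(E,\phi)$: since $\phi\in C_c^\infty$, both $\abs{\dvg\phi(x)+\dvg\phi(y)}\le 2\norm{\dvg\phi}_\infty$ and $\bigl|\tfrac{(x-y)\cdot(\phi(x)-\phi(y))}{|x-y|^2}\bigr|\le\norm{D\phi}_\infty$ hold, so $\abs{C(x,y)}$ is uniformly bounded by some $c_\phi$ and $\frac{|C(x,y)|}{|x-y|^{d+\alpha}}\le\frac{c_\phi}{|x-y|^{d+\alpha}}$; integrating over $E\times E^c$ and invoking \eqref{eq:frac_perim_as_one_double_integral} bounds $L_\alpha(E,\phi)$ by a multiple of $P_\alpha(E)<\infty$. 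This shows the singularity of the kernel is tamed precisely by the finite-perimeter hypothesis.

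The main obstacle is the rigorous $o(t)$ estimate, i.e.\ exchanging the limit $t\to 0$ of the difference quotient with the integral over the singular set $E\times E^c$. Writing $\Psi_t(\xi,\eta)\coloneqq J_t(\xi)J_t(\eta)|f_t(\xi)-f_t(\eta)|^{-(d+\alpha)}$, the integrand of the difference quotient is $t^{-1}(\Psi_t-\Psi_0)$, which converges pointwise a.e.\ to $\partial_t\Psi_t|_{t=0}=C(\xi,\eta)|\xi-\eta|^{-(d+\alpha)}$; I would produce a $t$-uniform integrable dominating function and conclude by dominated convergence. The crucial uniform estimate is the Lipschitz lower bound $|f_t(\xi)-f_t(\eta)|\ge(1-t\norm{D\phi}_\infty)|\xi-\eta|\ge\tfrac12|\xi-\eta|$, valid for $t\le(2\norm{D\phi}_\infty)^{-1}$, which keeps the perturbed kernel comparable to $|\xi-\eta|^{-(d+\alpha)}$ uniformly in $t$. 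Combining this with the uniform boundedness of $J_t$ and $\partial_t J_t$ (which equal $1$ and $0$ off $\supp\phi$) and the mean value theorem in $t$, one bounds $|t^{-1}(\Psi_t-\Psi_0)|\le\sup_{s\in[0,t]}|\partial_s\Psi_s|\le c|\xi-\eta|^{-(d+\alpha)}$, an integrable dominant over $E\times E^c$ by the finiteness argument above. Dominated convergence then yields the claimed first-order expansion.
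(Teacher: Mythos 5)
Your proposal is correct and follows essentially the same route as the paper's proof: change of variables to the fixed domain $E\times E^c$, expansion of the Jacobian determinants and of the kernel $|f_t(\xi)-f_t(\eta)|^{-(d+\alpha)}$, and the uniform lower bound $|f_t(\xi)-f_t(\eta)|\ge\tfrac12|\xi-\eta|$ to keep the perturbed kernel comparable to the unperturbed one, so that the error term is controlled by a constant times $P_\alpha(E)<\infty$. The only cosmetic difference is that the paper writes out the explicit second-order Taylor remainder of $z\mapsto|z|^{-d-\alpha}$ to get a quantitative $O(t^2)$ bound, whereas you invoke the mean value theorem in $t$ plus dominated convergence, which yields the same $o(t)$ conclusion.
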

\begin{proof}
By following the arguments in the proof of Lemma 4.1.1
with the choice $s = \alpha / 2$ on
p.\ 182 in \cite{dipierro2016nonlocal}, we obtain the
desired identity
\begin{gather*}
P_\alpha(f_t(E)) - P_\alpha(E) =
   t
    \int_E \int_{E^c}
    \frac{C(x,y)}{|x - y|^{d + \alpha}}
   \dd x \dd y
   + o(t).
\end{gather*} 
Before proving that $|C(x,y)|$ is uniformly bounded, which concludes the proof,
we briefly extend an argument in the proof of Lemma 4.1.1 in \cite{dipierro2016nonlocal}.
We first note that by means of Fubini's theorem and the substitution formula,
see for example Theorem 263D (v) in \cite{fremlinmeasure}, we obtain that $P_\alpha(f_t(E))$ is finite
if and only if
\begin{gather}\label{eq:tobefinite}
\int_E \int_{E^c} \frac{1}{|f_t(x) - f_t(y)|^{d + \alpha}}
|\det D f_t(x)| |\det D f_t(y)| \dd x \dd y
\end{gather}
is finite, where $D f_t = I + t \nabla \phi$.

We do this because we did not immediately see why the $o(\varepsilon)$-term ($o(t)$ in our
notation) in its proof that is due to the remainder term of the Taylor expansion
of $|x - y|^{-d - \alpha}$ is an $o(t)$-term even after integrating. Loosely speaking,
why were the authors of \cite{dipierro2016nonlocal} able to deduce
$\int_{E}\int_{E^c} o(t) \dd x \dd y = o(t)$? Since we did not
see the argument directly ourselves, we provide the arguments in more detail below for convenience.
They show both that the arguments in \cite{dipierro2016nonlocal} are correct and we just required
some more steps and, moreover, that the arguments from \cite{dipierro2016nonlocal}
even allow to prove the claim for the assumed generality $P_\alpha(E) < \infty$.

Applying Lemma 17.4 from \cite{maggi2012sets} and inspecting its proof (note that
$\phi$ has compact support and is Lipschitz), we obtain that
there exist $t_0 > 0$ and functions $c_1(t)$, $c_2(t)$ that are uniformly bounded and whose bounds
depend only on $t_0$ and $\phi$ such that for all $t$ with $|t| < t_0$ we obtain for all $x$, $y \in \R^d$
\begin{align}
|\det D f_t(x)| |\det D f_t(y)|
&= (1 + t \dvg \phi (x) + t^2 c_1(t))(1 + t \dvg \phi (y) + t^2 c_1(t)) \nonumber \\
&= (1 + t \dvg \phi (x))(1 + t \dvg \phi (y))+ t^2 c_2(t).\label{eq:detdet_bound}
\end{align}
Moreover, we deduce for $x \neq y$
\begin{align*}
\frac{1}{|f_t(x) - f_t(y)|^{d + \alpha}}
&= \frac{1}{|x - y + t(\phi(x) - \phi(y))|^{d + \alpha}} \\
&= \frac{1}{|x - y|^{d + \alpha}}
   - t \underbrace{(d + \alpha)\frac{(x - y)^T(\phi(x) - \phi(y))}{|x - y|^{d + \alpha + 2}}}_{\eqqcolon \ell(x,y)}
   + \tilde{r}(x,y),
\end{align*}
where the remainder $\tilde{r}(x,y)$ term of the Taylor expansion of $z \mapsto |z|^{-d-\alpha}$
at $x - y$ is given by
\[\tilde{r}(x,y) 
= t^2 (\phi(x) - \phi(y))^T
\left[
\frac{(d + \alpha + 2)(d + \alpha)}{|\xi|^{d + \alpha + 4}} \xi \xi^T 
-\frac{d + \alpha}{|\xi|^{d + \alpha + 2}} I 
\right] (\phi(x) - \phi(y))
\]
for some $\xi = \xi(x,y)$ in the line segment between $[x - y, x - y + t (\phi(x) - \phi(y))$.
Because $\phi$ is compactly supported and Lipschitz continuous with Lipschitz constant $L$,
there exists $0 < t_1 \le t_0$ so that for all $|t| \le t_1$ it holds that
$|\xi(x,y)|\ge \tfrac{1}{2} |x - y|$ uniformly for all $x$, $y \in \R^d$.
Combining this with the Lipschitz continuity of $\phi$, we deduce
\[
\tilde{r}(x,y) \le t^2 L^2 \left[
\frac{(d + \alpha + 2)(d + \alpha)}{2^{d + \alpha + 4}}
+ \frac{d + \alpha}{2^{d + \alpha + 2}}
\right]\frac{1}{|x - y|^{d + \alpha}}.
\]
Computing the double integral over the remainder term of the Taylor expansion implies
for $r(t) \coloneqq \int_{E}\int_{E^c} \tilde{r}(x,y) \dd x \dd y$ that
\[
|r(t)| = \left|\int_{E}\int_{E^c} \tilde{r}(x,y) \dd x \dd y\right|
\le t^2 c_3 P_\alpha(E) \]
for some $c_3 > 0$ and all $|t| \le t_1$, which gives that $r(t) = o(t)$.
With a similar argument, one can deduce that there exists $c_4 > 0$ such that
\[ \left|\int_{E}\int_{E^c} \ell(x,y) \dd x \dd y \right| \le c_4 P_\alpha(E).
\]
Combining these considerations with \eqref{eq:detdet_bound} and using
that $\phi$ is compactly supported therein,
we obtain that \eqref{eq:tobefinite} is finite and in turn also
$P_\alpha(f_t(E))$ is finite for all $t$ with $|t| \le t_1$.

Due to these arguments, we can combine all $o(t)$ terms and obtain the claimed formula for $P_\alpha(f_t(E)) - P_\alpha(E)$.
It remains to show that $|C(x,y)|$ is uniformly bounded on $E \times E^c$.
Because $\phi \in C_c^\infty(\Omega;\R^d)$, $|\dvg \phi(x)|$ and $|\dvg \phi(y)|$ are uniformly
bounded. Moreover, because $\phi \in C_c^\infty(\Omega;\R^d)$  is globally Lipschitz
with Lipschitz constant $L \ge 0$, we obtain that the absolute value of the third term
is bounded by $L (d + \alpha)$ in combination with the Cauchy--Schwarz inequality.
\end{proof}
We note that an alternative proof of this claim follows
from the arguments in Section 3 of \cite{maggi2017capillarity}. We also note that the
strategy above uses the same basic steps as the arguments in Section 17 of
\cite{maggi2012sets} for the limit case $\alpha = 1$.

\begin{lemma}\label{lem:sym_diff_boundedness}
Let $\phi \in C_c^\infty(\Omega, \R^d)$. Let $(f_t)_{t\in(-\varepsilon,\varepsilon)}$ be the local variation defined by $f_t \coloneqq I + t \phi$ for $t \in (-\varepsilon,\varepsilon)$.
Then there exist $0 < \varepsilon_0 < \varepsilon$ and $L > 0$ such that for all $s$, $t \in (-\varepsilon_0,\varepsilon_0)$, and measurable sets
$E$, $F \subset \Omega$ with $P_\alpha(E) < \infty$
it holds that
\[ \left|\lambda(f_t(E)\cap F) - \lambda(E \cap F)\right|
\le L |t|^\alpha P_\alpha(E) \]
and in particular,
\[ \lambda( f_t(E) \symdiff E) \le L |t|^\alpha P_\alpha(E). \]
\end{lemma}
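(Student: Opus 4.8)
The plan is to reduce both displayed inequalities to a single estimate on the symmetric difference and then to prove that estimate by exploiting that the fractional perimeter is a weighted average of the $L^1$-translation moduli of $\chi_E$.

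First I would dispose of the role of $F$. For any measurable $F$, decomposing $f_t(E)\cap F$ and $E\cap F$ along their common part $f_t(E)\cap E\cap F$ gives $|\lambda(f_t(E)\cap F)-\lambda(E\cap F)|\le\lambda(f_t(E)\symdiff E)$, so the first inequality follows from the second uniformly in $F$; this is precisely why the bound does not see $F$. For the symmetric-difference bound itself I would use $\chi_{f_t(E)}(z)=\chi_E(f_t^{-1}(z))$ and the substitution $z=f_t(x)$ to write $\lambda(f_t(E)\symdiff E)=\int_{\R^d}|\chi_E(x)-\chi_E(f_t(x))|\,|\det Df_t(x)|\dd x$. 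Choosing $0<\varepsilon_0<\varepsilon$ small enough that $f_t=I+t\phi$ is a diffeomorphism with $\det Df_t\in[c,C]$ for $|t|<\varepsilon_0$ (possible since $Df_t=I+t\nabla\phi$ and $\phi\in C_c^\infty$), it then remains to bound $I(t):=\int_{\R^d}|\chi_E(f_t(x))-\chi_E(x)|\dd x$ by a constant times $|t|^\alpha P_\alpha(E)$.

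The key representation is that, substituting $y=x+\zeta$ in \eqref{eq:fractional_perimeter}, one has $P_\alpha(E)=\int_{\R^d}g(\zeta)\,|\zeta|^{-(d+\alpha)}\dd\zeta$, where $g(\zeta):=\int_{\R^d}|\chi_E(x+\zeta)-\chi_E(x)|\dd x=\lambda(E\symdiff(E-\zeta))$. Since $|\zeta|^{d+\alpha}\le r^{d+\alpha}$ on $B(0,r)$, this yields the single quantitative input I need, namely $\int_{B(0,r)}g(\zeta)\dd\zeta\le r^{d+\alpha}P_\alpha(E)$. To bound $I(t)$, set $\delta:=|t|\,\|\phi\|_\infty$, so that $|t\phi(x)|\le\delta$. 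For every $\zeta$ the triangle inequality gives $|\chi_E(f_t(x))-\chi_E(x)|\le|\chi_E(f_t(x))-\chi_E(x+\zeta)|+|\chi_E(x+\zeta)-\chi_E(x)|$; averaging over $\zeta\in B(0,2\delta)$ and integrating in $x$ splits $I(t)$ into two terms. The second is $\tfrac{1}{\lambda(B_{2\delta})}\int_{B(0,2\delta)}g(\zeta)\dd\zeta\le C\delta^\alpha P_\alpha(E)$ by the input above. For the first term I would, for each fixed $x$, substitute $\eta=\zeta-t\phi(x)$ so the displacement is re-centered at $f_t(x)=x+t\phi(x)$; since $B(0,2\delta)$ is shifted into $B(0,3\delta)$, Fubini followed by the change of variables $u=f_t(x)$ (here the lower bound $\det Df_t\ge c$ enters) turns the inner $x$-integral into $\tfrac1c g(\eta)$, so the first term is at most $\tfrac{1}{c\,\lambda(B_{2\delta})}\int_{B(0,3\delta)}g(\eta)\dd\eta\le C'\delta^\alpha P_\alpha(E)$. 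Adding the two bounds gives $I(t)\le L'|t|^\alpha P_\alpha(E)$, hence the claim with a suitable $L$.

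The hard part is exactly that $f_t$ moves each point by the state-dependent vector $t\phi(x)$ rather than by a fixed translation, so one cannot directly invoke a single modulus $g(h)$. The device that resolves this is the change of variables $u=f_t(x)$ in the re-centered first term, which trades the variable displacement for the fixed displacement $\eta$ at the cost of the Jacobian factor $1/c$; everything else is the elementary averaging estimate $\int_{B(0,r)}g\le r^{d+\alpha}P_\alpha(E)$ on balls of radius $2\delta$ and $3\delta$. I would avoid a purely geometric ``thickened boundary'' approach, i.e.\ bounding $\lambda(\{x\in E:\dist(x,E^c)\le\delta\})$, since it is too lossy and in fact fails for sets with thin features, where $P_\alpha(E)$ can be arbitrarily small while the thickened boundary is not; routing through the translation modulus $g$ and the diffeomorphism's Jacobian is what makes the fractional scaling $|t|^\alpha$ appear correctly.
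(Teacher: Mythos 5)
Your argument is correct, but it takes a genuinely different route from the paper. The paper bounds $\left|\lambda(f_t(E)\cap F)-\lambda(E\cap F)\right|$ by $\int_F |\chi_E(g_t(x))-\chi_E(x)|\dd x$ with $g_t=f_t^{-1}$, approximates $\chi_E$ by smooth functions $u_\delta$ in $W^{\alpha,1}(\R^d)$, and invokes the fractional fundamental theorem of calculus from \cite{comi2019distributional} to get $\int_F|u_\delta(g_t(x))-u_\delta(x)|\dd x\le \gamma_{d,\alpha}\sup_x\|f_t(x)-x\|^\alpha\|\nabla^\alpha u_\delta\|_{L^1}$, then passes to the limit $\delta\to 0$ using $\|\nabla^\alpha u_\delta\|_{L^1}\lesssim [u_\delta]_{W^{\alpha,1}}\to P_\alpha(E)$. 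You instead reduce everything to $\lambda(f_t(E)\symdiff E)$ (the paper goes the other way, obtaining the symmetric-difference bound as the special case $F=\Omega$), and prove the core estimate by the elementary averaging device: the rewriting $P_\alpha(E)=\int g(\zeta)|\zeta|^{-(d+\alpha)}\dd\zeta$ with $g(\zeta)=\lambda(E\symdiff(E-\zeta))$ gives $\int_{B(0,r)}g\le r^{d+\alpha}P_\alpha(E)$, and the triangle inequality plus the re-centering substitution $\eta=\zeta-t\phi(x)$ followed by $u=f_t(x)$ converts the state-dependent displacement into fixed translations at the cost of bounded Jacobian factors. Your route is self-contained --- it needs only Tonelli, the change-of-variables formula, and the smallness of $|t|$ ensuring $\det Df_t\in[c,C]$ --- and avoids both the density of $C_c^\infty$ in $W^{\alpha,1}(\R^d)$ and the fractional gradient machinery; it is essentially the classical Gagliardo-seminorm-controls-translation-moduli argument adapted to non-constant displacements. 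The paper's route buys explicit constants $\gamma_{d,\alpha}\mu_{d,\alpha}$ tied to the fractional-calculus literature it builds on elsewhere, but is otherwise no stronger. All the steps you sketch check out (the degenerate cases $t=0$ or $\phi\equiv 0$ are trivial, and $E\subset\Omega$ bounded makes $g$ finite), so I see no gap.
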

\begin{proof}
We follow the proof strategy of Lemma 17.9 in \cite{maggi2012sets}. To this end,
let $g_t \coloneqq f_t^{-1}$
for all $t \in (-\varepsilon,\varepsilon)$ and let $s$, $t \in (-\varepsilon,\varepsilon)$.
Let $u_\delta \to \chi_E$ in $W^{\alpha,1}(\R^d)$ with
$u_\delta \in C_c^\infty(\R^d)$, which exists by Theorem A.1 in \cite{comi2019distributional}.
Then we obtain
\begin{align}
\left|\lambda(f_t(E)\cap F) - \lambda(E \cap F)\right|
&\le \int_{F} |\chi_E(g_{t}(x)) - \chi_E(x)|\dd x \label{eq:int} \\
&=\lim_{\delta \to 0} \underbrace{\int_{F} |u_\delta(g_{t}(x)) - u_\delta(x)|\dd x}_{\eqqcolon d_\delta},\label{eq:mollified_int}
\end{align}
where we note that the right hand side of the inequality \eqref{eq:int}
is identical to $\lambda( f_t(E) \Delta E)$ for the choice $F = \Omega$
so that the second claim follows from the first in this case.

Because the function $u_\delta$ is smooth, we can follow the proof of Proposition
3.14 in \cite{comi2019distributional} (which in turn relies on the fractional 
fundamental theorem of calculus as it is given in Theorem 3.12 in
\cite{comi2019distributional})
in order to obtain
\[ d_\delta \le
  \gamma_{d,\alpha} \sup_{x \in \R^d}
  \|\phi_t(x) - x\|^\alpha
  \|\nabla^\alpha u_\delta\|_{L^1(\R^d,\R^d)},
\]
where $y$ in the proof of Proposition 3.14 in
\cite{comi2019distributional} is replaced by $\phi_t(x) - x$,
H\"{o}lder's inequality is applied to obtain the
term $\sup_{x \in \R^d}
\|\phi_t(x) - x\|^\alpha$
and the constant $\gamma_{d,\alpha}$ is from Proposition 3.14
in \cite{comi2019distributional} too.
Then, the identity
\[ \sup_{x\in \R^d} \|\phi_t(x) - x\|
   =  |t| \sup_{x \in \R^d}\|\phi(x)\|
\]
and the fact that $\|\phi(x)\|$ is bounded because $\phi \in C^\infty_c(\Omega,\R^d)$ 
yield
\[ d_\delta \le
c \gamma_{d,\alpha} |t|^\alpha
\|\nabla^\alpha u_\delta\|_{L^1(\R^d,\R^d)}
\le c \gamma_{d,\alpha} \mu_{d,\alpha} |t|^\alpha
    \underbrace{\int_{\R^d}\int_{\R^d} \frac{|u_\delta(x) - u_\delta(y)|}{|x - y|^{d + \alpha}}\dd x\dd y}_{\eqqcolon [u_\delta]_{W^{\alpha,1}(\R^d)}}
\]
for some $c > 0$ and the constant $\mu_{d,\alpha}$ from (1.2) in 
\cite{comi2019distributional}.

Since $[u_\delta]_{W^{\alpha,1}(\R^d)}$ is the seminorm of $W^{\alpha,1}(\R^d)$
and $u_\delta$ approximates $\chi_E$ in norm in $W^{\alpha,1}(\R^d)$, we
obtain
\[ [u_\delta]_{W^{\alpha,1}(\R^d)}
   \to [\chi_E]_{W^{\alpha,1}(\R^d)} = P_\alpha(E).
\]
\end{proof}

\begin{proposition}[Variation of the linearized objective]\label{prp:variation_linearized_objective}
Let $g \in C^2(\bar{\Omega})$.
Let $E \subset \Omega$ satisfy $P_\alpha(E) < \infty$.
Let $\phi \in C_c^\infty(\Omega, \R^d)$. Let $(f_t)_{t\in(-\varepsilon,\varepsilon)}$ be the local variation defined by $f_t \coloneqq I + t \phi$ for $t \in (-\varepsilon,\varepsilon)$.
Then,
\[
(g, \chi_{f_t(E)} - \chi_{E})_{L^2}
= t \int_E \dvg (g(x) \phi(x)) \dd x
+ O(t^2).
\]
\end{proposition}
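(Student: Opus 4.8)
The plan is to move the $t$-dependence off the transformed set $f_t(E)$ and onto the integrand by a change of variables, after which the statement reduces to two scalar Taylor expansions. First I would write
\[
(g,\chi_{f_t(E)} - \chi_E)_{L^2} = \int_{f_t(E)} g(x)\dd x - \int_E g(x)\dd x,
\]
and apply the substitution $x = f_t(z)$ to the first integral. Since $f_t = I + t\phi$ is a diffeomorphism for $\abs{t}$ small by \cref{prp:elementary_local_variation_properties}, with Jacobian $Df_t = I + t\nabla\phi$, the change-of-variables formula for Lebesgue integrals on measurable sets gives
\[
\int_{f_t(E)} g(x)\dd x = \int_E g(f_t(z))\,\abs{\det Df_t(z)}\dd z ,
\]
so that
\[
(g,\chi_{f_t(E)} - \chi_E)_{L^2} = \int_E \bigl[g(f_t(z))\,\abs{\det Df_t(z)} - g(z)\bigr]\dd z .
\]
I would stress that this identity holds for an arbitrary measurable $E$: it needs only that $f_t$ is a diffeomorphism and, crucially, does \emph{not} require $E$ to have finite (fractional) perimeter.

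Next I would expand the bracketed integrand to first order in $t$. Because $\phi \in C_c^\infty(\Omega;\R^d)$ is supported in a compact set $K \subset \Omega$ and $f_t(K) \subset \Omega$ for $\abs{t}$ small (by positive distance of $K$ to $\partial\Omega$ and $\norm{t\phi}_\infty \le \abs{t}\norm{\phi}_\infty$), the value $g(f_t(z))$ is well defined; using $g \in C^2(\bar{\Omega})$, Taylor's theorem yields $g(f_t(z)) = g(z) + t\,\nabla g(z)\cdot\phi(z) + O(t^2)$ with remainder controlled by $\norm{\nabla^2 g}_\infty \norm{\phi}_\infty^2$. Likewise $\det(I + t\nabla\phi(z)) = 1 + t\,\dvg\phi(z) + O(t^2)$, and this determinant is positive for $\abs{t}$ small so the absolute value may be dropped. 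Multiplying the two expansions and collecting terms gives
\[
g(f_t(z))\,\abs{\det Df_t(z)} - g(z) = t\bigl(\nabla g(z)\cdot\phi(z) + g(z)\,\dvg\phi(z)\bigr) + O(t^2) = t\,\dvg(g(z)\phi(z)) + O(t^2),
\]
where in the last step I use the product rule $\dvg(g\phi) = \nabla g\cdot\phi + g\,\dvg\phi$.

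Finally I would integrate over $E$. Since the integrand vanishes identically for $z \notin K$, the effective domain of integration is $E \cap K$, which has finite Lebesgue measure; hence the uniform-in-$z$ remainder $O(t^2)$ integrates to $O(t^2)$, which establishes the claim. I do not expect a serious obstacle: the only points needing care are the well-definedness of $g \circ f_t$ and positivity of $\det Df_t$ for small $t$ (both from compact support of $\phi$ and smallness of $t$) and the uniformity of the $O(t^2)$ term (from $g \in C^2(\bar{\Omega})$ together with compact support of $\phi$). The genuinely instructive feature is that the naive estimate $\abs{(g,\chi_{f_t(E)} - \chi_E)_{L^2}} \le \norm{g}_\infty\,\lambda(f_t(E) \symdiff E)$ combined with \cref{lem:sym_diff_boundedness} yields only $O(\abs{t}^\alpha)$, which is too weak to expose the linear-in-$t$ structure. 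The change of variables is precisely what captures the cancellation in $\chi_{f_t(E)} - \chi_E$ against the smooth weight $g$ and thereby upgrades the remainder to $O(t^2)$.
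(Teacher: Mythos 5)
Your proof is correct and is essentially the argument the paper invokes: the paper simply defers to Proposition 17.8 in Maggi's book, whose proof is exactly your change of variables $x = f_t(z)$ followed by the uniform Taylor expansions of $g \circ f_t$ and of $\det(I + t\nabla\phi)$ on the compact support of $\phi$. Your side observations (that $P_\alpha(E)<\infty$ is not needed here, and that the crude bound via \cref{lem:sym_diff_boundedness} would only give $O(\abs{t}^\alpha)$) are accurate but not part of the paper's proof.
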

\begin{proof}
The (first half of the) arguments in the proof of Proposition 17.8
in \cite{maggi2012sets} imply
\[
(g, \chi_{f_t(E)} - \chi_{E})_{L^2}
= t \int_E \dvg (g(x) \phi(x)) \dd x
+ O(t^2),
\]
where $\int_E \dvg (g(x) \phi(x)) \dd x$ exists and is finite because of the assumed regularity of $g$.
\end{proof}
\begin{remark}
We note that the assumed regularity on $g$ for the Taylor expansion of the Lebesgue measure
in \cref{prp:elementary_local_variation_properties} may deemed to be unrealistically high.
Thus improving the required regularity is an important question for further research, in
particular, because the much weaker requirement $g \in C(\bar{\Omega})$ is sufficient for
proving a Taylor expansion in the non-local case, see \cite[Proposition 17.8]{maggi2012sets}.
We have, however, not found a means to do so until this point.
\end{remark}

\section{Standing assumptions and existence of solutions}\label{sec:existence}
We provide two standing assumptions on our problem that allow us to deduce the existence of solutions
as well as first-order optimality conditions for \eqref{eq:p} and \eqref{eq:p0} .
\begin{assumption}\label{ass:standing}~
\begin{enumerate}
\item[A.1] Let $F : L^1(\Omega) \to \R$ be bounded below.
\item[A.2] Let $F : L^2(\Omega) \to \R$ be twice continuously Fr\'{e}chet
differentiable. For some $C > 0$ and all $\xi \in L^2(\Omega)$, let the
bilinear form induced by the Hessian $\nabla^2 F(\xi) : L^2(\Omega) \times L^2(\Omega) \to \R$
satisfy $|\nabla^2 F(\xi)(u,w)| \le C\|u\|_{L^1}\|w\|_{L^1}$.
\end{enumerate}
\end{assumption}
The second part of \cref{ass:standing} is rather restrictive. The estimate on the Hessian
with respect to the $L^1$-norms is, for example, not satisfied if $F(w) = \tfrac 12 \|w\|_{L^2}^2$
and it implies that $F$ involves some Lipschitz operation that maps $L^1(\Omega)$
to a smaller space. This may be a convolution operator or a solution operator of a PDE, see
also the discussions in \cite{leyffer2022sequential,manns2023on,manns2023convergence,manns2023homotopy}.

The existence of minimizers for the limit problem \eqref{eq:p0} 
under \cref{ass:standing} follows as in \cite{leyffer2022sequential,manns2023on}.
The existence of minimizers for the problems \eqref{eq:p} follows from the compactness
and lower semicontinuity properties of nonlocal perimeters, see, for example,
\cite[Section 3.7]{cozzi2017regularity}. We briefly recap how the existence
is achieved below.
\begin{proposition}\label{prp:existence}
Let $\alpha \in (0,1)$.
Let \cref{ass:standing} hold. Then \eqref{eq:p} admits a minimizer
$w = \sum_{i=1}^M w_i \chi_{E_i}$
\end{proposition}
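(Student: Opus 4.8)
The plan is to apply the direct method of the calculus of variations, where the crucial compactness is obtained not from a weak-$^*$ argument—which is unavailable since $W^{\alpha,1}(\R^d)$ lacks a predual—but from the compact embedding of $W^{\alpha,1}$ into $L^1$ applied to the indicator functions of the level sets. First I would fix a minimizing sequence $\{w^n\}_n \subset \calF$ with $J_\alpha(w^n) \to m \coloneqq \inf_{\calF} J_\alpha$. The value $m$ is finite: any constant $w \equiv w_j$ is feasible with $R_\alpha(w) = (1-\alpha)|w_j| P_\alpha(\Omega) < \infty$ because $\Omega$ is bounded and polyhedral (hence Lipschitz), while $F$ is bounded below by A.1 in \cref{ass:standing}. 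Since $R_\alpha \ge 0$ and $F \ge \underline{F}$ for some $\underline{F} \in \R$, the bound $J_\alpha(w^n) \le m + 1$ for large $n$ yields $R_\alpha(w^n) \le (m + 1 - \underline{F})/\eta$, so that $\sup_n P_\alpha(E_i^n) < \infty$ for every index $i$ with $w_i \neq 0$.

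Next I would upgrade this to convergence of all level sets. For each $i$ with $w_i \neq 0$, the indicator $\chi_{E_i^n}$ is bounded in $W^{\alpha,1}(\R^d)$, since its seminorm equals $P_\alpha(E_i^n)$ and its $L^1$-norm is $\lambda(E_i^n) \le \lambda(\Omega)$. The compact embedding of $W^{\alpha,1}$ into $L^1$ on the bounded domain, see \cite[Section 3.7]{cozzi2017regularity}, then furnishes, after passing to a common subsequence, limits $\chi_{E_i^n} \to \chi_{E_i}$ in $L^1(\Omega)$. At most one element of $W \subset \Z$ equals zero; if $w_{i_0} = 0$, then the partition identity $\chi_{E_{i_0}^n} = \chi_\Omega - \sum_{i \neq i_0} \chi_{E_i^n}$ shows that this remaining indicator converges in $L^1$ as well, so no separate control of $P_\alpha(E_{i_0}^n)$ is needed. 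Passing to a further subsequence I obtain a.e.\ convergence, whence each limit $\chi_{E_i}$ is again $\{0,1\}$-valued and $\sum_i \chi_{E_i} = \chi_\Omega$ a.e.; consequently $\{E_i\}_i$ partitions $\Omega$ and $w \coloneqq \sum_i w_i \chi_{E_i} \in \calF$ with $w^n \to w$ in $L^1(\Omega)$.

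It remains to pass to the limit in the objective. The perimeter term is handled by \cref{lem:Ralpha_lsc}, which gives $R_\alpha(w) \le \liminf_n R_\alpha(w^n)$. For the principal part, I would use that the $w^n$ take values in the finite set $W$ and are therefore uniformly bounded in $L^\infty(\Omega)$; combined with $w^n \to w$ in $L^1(\Omega)$ this yields $w^n \to w$ in $L^2(\Omega)$, and the Fr\'{e}chet differentiability of $F$ on $L^2(\Omega)$ from A.2 in \cref{ass:standing} implies $F(w^n) \to F(w)$. Assembling, $J_\alpha(w) = F(w) + \eta R_\alpha(w) \le \lim_n F(w^n) + \eta \liminf_n R_\alpha(w^n) = \liminf_n J_\alpha(w^n) = m$, so that $w$ is a minimizer.

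The main obstacle is the compactness step in the second paragraph. Everything else is the standard lower-semicontinuity bookkeeping, but the extraction of an $L^1$-convergent subsequence is exactly the point where the analogy with $\BV$ breaks down, as there is no Banach--Alaoglu theorem to lean on. The resolution is structural: because the feasible functions are piecewise constant with values in a fixed finite set, the problem reduces to the compactness of finitely many indicator sequences of uniformly bounded fractional perimeter, for which the fractional Rellich--Kondrachov embedding applies. I would want to verify that the cited compactness is stated for the extension-by-zero indicators on the bounded domain $\Omega$ rather than on all of $\R^d$, but this is consistent with the convention fixed around \eqref{eq:frac_perim_as_one_double_integral}.
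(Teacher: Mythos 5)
Your argument is essentially the paper's own proof: the direct method with compactness of the indicator sequences in $L^1$ furnished by the fractional Sobolev embedding, a.e.\ convergence of a subsequence to recover the partition structure, lower semicontinuity of $R_\alpha$ via \cref{lem:Ralpha_lsc}, and continuity of $F$ through the uniform $L^\infty$-bound. Your separate treatment of the level set corresponding to a possible zero value of $w_i$ (whose fractional perimeter is not controlled by the weighted sum $R_\alpha$) is a small point of extra care that the paper's proof glosses over, but the route is the same.
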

\begin{proof}
We apply the direct method of calculus of variations. There is a real infimum
of \eqref{eq:p} because all terms of the objective are bounded. We thus consider a minimizing sequence
$w^n = \sum_{i=1}^M w_i \chi_{E_i^n}$ with a corresponding
sequence of partitions $(\{E_1^n,\ldots,E_M^n\})_n$ of $\Omega$. Because all terms
of the objective are bounded below, we obtain that the sequences 
$(P_\alpha(E_i^n))_n$ are bounded. Consequently, the sequences $(\chi_{E_i^n})_n$ are bounded
in a space that, similar to $\BV(\Omega)$, admits sequential weak-$^*$ compactness properties, see
\cite[Corollary 4.6 and Proposition 4.8]{comi2019distributional}, which
yields a limit function $w = \sum_{i=1}^M w_i \chi_{E_i}$ such that
$\chi_{E^n_i} \to \chi_{E_i}$ in $L^1(\Omega)$ for all $i \in \{1,\ldots,M\}$.
Because convergence in $L^1(\Omega)$ implies pointwise a.e.\ convergence 
for a subsequence, we obtain that the limit sets $E_i$ are a partition
of $\Omega$ except for a set of Lebesgue measure zero. In other words, $w \in \calF$.
Moreover, Lebesgue's dominated convergence theorem gives $w^n \to w$ in $L^2(\Omega)$.

Because $R_\alpha$ is lower semicontinuous with respect to convergence
in $L^1(\Omega)$ on our feasible set, see \cref{lem:Ralpha_lsc}, and $F$
is continuous, see \cref{ass:standing}, we obtain that $w$ minimizes \eqref{eq:p}.
\end{proof}

\section{Compactness and $\Gamma$-convergence for $\alpha \nearrow 1$}\label{sec:gamma}
We deduce compactness in $L^1(\Omega)$ and a $\Gamma$-convergence-type result from the results
in \cite{ambrosio2010gamma}. It is immediate that $P_\alpha(E)$ as
defined in \eqref{eq:fractional_perimeter} equals $\mathcal{J}^1_s(E, \R^d)$
in \cite{ambrosio2010gamma} and $\mathcal{J}^2_s(E, \R^d)$ in \cite{ambrosio2010gamma} 
equals zero for all $E \subset \Omega$. We start with the compactness result, continue with the
$\liminf$- and $\limsup$-inequalities for $\Gamma$-convergence, and conclude that global minimizers converge
to global minimizers.
\begin{proposition}[Corollary of Theorem 1 in \cite{ambrosio2010gamma}]\label{prp:compactness_ato1}
Let \cref{ass:standing} hold. Let $\alpha \nearrow 1$. Let $w^\alpha$ be feasible for \eqref{eq:p}.
Let $\sup_{\alpha \nearrow 1} J_\alpha(w^\alpha) < \infty$.
Then $\{w^\alpha\}_\alpha$ is relatively compact in $L^1(\Omega)$,
admits a limit point, and all limit points are $W$-valued.
\end{proposition}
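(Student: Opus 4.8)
The plan is to reduce the claim to a uniform bound on the rescaled fractional perimeters $(1-\alpha) P_\alpha(E_i^\alpha)$ of all level sets, and then to invoke the compactness part of Theorem 1 in \cite{ambrosio2010gamma}. First I would use \cref{ass:standing} (A.1): since $F$ is bounded below by some $\underline{F} \in \R$ and, setting $C \coloneqq \sup_{\alpha} J_\alpha(w^\alpha) < \infty$, the decomposition $J_\alpha = F + \eta R_\alpha$ yields $R_\alpha(w^\alpha) \le (C - \underline{F})/\eta$ uniformly in $\alpha$. Writing out $R_\alpha(w^\alpha) = (1-\alpha)\sum_{i=1}^M |w_i| P_\alpha(E_i^\alpha)$ and using that the $w_i$ are integers, so that $|w_i| \ge 1$ whenever $w_i \neq 0$, this immediately bounds $(1-\alpha) P_\alpha(E_i^\alpha)$ uniformly in $\alpha$ for every index $i$ with $w_i \neq 0$.

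The remaining case is the at most one index $i_0$ with $w_{i_0} = 0$, which carries no weight in $R_\alpha$. Here I would exploit that $P_\alpha(E) = [\chi_E]_{W^{\alpha,1}(\R^d)}$ is a seminorm and that the level sets partition $\Omega$, so that, after extension by zero outside $\Omega$, the identity $\chi_{E_{i_0}^\alpha} = \chi_\Omega - \sum_{j \neq i_0} \chi_{E_j^\alpha}$ holds on all of $\R^d$. The triangle inequality for the Gagliardo seminorm then gives $P_\alpha(E_{i_0}^\alpha) \le P_\alpha(\Omega) + \sum_{j \neq i_0} P_\alpha(E_j^\alpha)$. Multiplying by $(1-\alpha)$, the sum is already controlled by the previous step, while $(1-\alpha) P_\alpha(\Omega)$ remains bounded as $\alpha \nearrow 1$ because $\Omega$ is a bounded polyhedral domain and therefore of finite perimeter, for which the rescaled fractional perimeter converges to a finite multiple of $P(\Omega;\R^d)$ by the results in \cite{ambrosio2010gamma}. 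Consequently $\sup_\alpha (1-\alpha) P_\alpha(E_i^\alpha) < \infty$ for all $i \in \{1,\ldots,M\}$.

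With these uniform bounds in hand, I would fix a bounded ball $B \supset \Omega$ and apply the compactness statement of Theorem 1 in \cite{ambrosio2010gamma} to each family $(\chi_{E_i^\alpha})_\alpha$ on $B$; since there are only finitely many indices, passing to successively nested subsequences produces a single sequence $\alpha_n \nearrow 1$ along which $\chi_{E_i^{\alpha_n}} \to \chi_{E_i}$ in $L^1(B)$, hence in $L^1(\Omega)$, for every $i$ simultaneously. This already yields relative compactness in $L^1(\Omega)$ and the existence of a limit point $w = \sum_{i=1}^M w_i \chi_{E_i}$. To see that every limit point is $W$-valued, I would take any subsequence realizing it and pass to a further subsequence converging a.e.: each $\chi_{E_i^{\alpha_n}}$ takes values in $\{0,1\}$, so its a.e.\ limit $\chi_{E_i}$ does as well and is a genuine indicator, and passing the partition identity $\sum_{i} \chi_{E_i^{\alpha_n}} = 1$ a.e.\ on $\Omega$ to the limit shows that $\{E_i\}$ partition $\Omega$ up to a null set; hence $w \in \calF$.

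I expect the main obstacle to be the precise invocation of the compactness part of Theorem 1 in \cite{ambrosio2010gamma}: one must check that its hypotheses are met by the rescaled perimeters on the bounded set $B$ and that the extension-by-zero convention used for the jumps across $\partial\Omega$ is compatible with the statement there. The auxiliary bound for the zero level set via the seminorm triangle inequality, together with the finiteness of $(1-\alpha)P_\alpha(\Omega)$ for the polyhedral domain, is the only genuinely problem-specific ingredient; the rest is a standard finite diagonal extraction together with the a.e.-limit argument for the partition.
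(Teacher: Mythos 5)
Your proposal is correct and follows essentially the same route as the paper: bound the rescaled fractional perimeters $(1-\alpha)P_\alpha(E_i^\alpha)$ via the objective bound and the lower bound on $F$, invoke the compactness part of Theorem~1 in \cite{ambrosio2010gamma} for each family of characteristic functions with a finite diagonal extraction, and use a.e.\ convergence along a subsequence to see that limit points are $W$-valued partitions. Your extra step handling the (at most one) level set with $w_{i_0}=0$ via the seminorm triangle inequality is a careful addition the paper glosses over, though it is not strictly needed since that level set does not contribute to $w^\alpha$ and its limit is determined as the complement of the others.
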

\begin{proof}
Because $F$ and the summands in $R_\alpha$ are bounded below
it follows that $\sup_{\alpha \nearrow 1} (1 - \alpha)P_\alpha(E_i^\alpha) < \infty$
for all $i \in \{1,\ldots,M\}$, where $E_i^\alpha = (w^\alpha)^{-1}(\{w_i\})$.
Consequently, we can apply Theorem 1 in \cite{ambrosio2010gamma} in order to obtain that
for all $i \in \{1,\ldots,M\}$, the sequence $(\chi_{E_i}^\alpha)_\alpha$ is relatively
compact in $L^1_{loc}(\R^d)$ and in turn in $L^1(\Omega)$.
Recalling $w^\alpha = \sum_{i=1}^M w_i \chi_{E_i^\alpha}$, we obtain
that the sequence $(w^\alpha)_\alpha$ is relatively compact.
Because convergence in $L^1(\Omega)$ implies pointwise convergence
a.e.\ for a subsequence, we obtain that the limit is
$W$-valued, see also \cite{leyffer2022sequential}.
\end{proof}

\begin{proposition}[Corollary of Theorem 2 in \cite{ambrosio2010gamma}]\label{prp:liminf_limsup_ato1}
Let $w = \sum_{i=1}^M w_i \chi_{E_i}$ for a partition $\{E_1,\ldots,E_M\}$
of $\Omega$ into measurable sets $E_i$.
Then:
\begin{enumerate}
\item if $w^\alpha = \sum_{i=1}^M w_i \chi_{E^\alpha_i} \to w$ in $L^1(\Omega)$
for $i \in \{1,\ldots,M\}$ for partitions $\{E_1^\alpha,\ldots,E_M^\alpha\}$ of $\Omega$,
it holds that $J(w) \le \liminf_{\alpha \nearrow 1} J_\alpha(w)$,
\item there is a sequence $w^\alpha = \sum_{i=1}^M w_i \chi_{E^\alpha_i} \to w$ in $L^1(\Omega)$
for $i \in \{1,\ldots,M\}$ for partitions $\{E_1^\alpha,\ldots,E_M^\alpha\}$ of $\Omega$
such that $J(w) \ge \limsup_{\alpha \nearrow 1} J_\alpha(w^\alpha)$.
\end{enumerate}
\end{proposition}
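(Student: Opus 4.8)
The plan is to treat the two terms of the objective separately, since $J_\alpha = F + \eta R_\alpha$ and $J = F + \eta R$, and to reduce the passage from $R_\alpha$ to $R$ to the scalar $\Gamma$-convergence of $(1-\alpha)P_\alpha$ towards $\omega_{d-1}P(\cdot;\R^d)$ provided by Theorem 2 in \cite{ambrosio2010gamma}, applied level set by level set. First I would record the continuity of the principal part: if $w^\alpha \to w$ in $L^1(\Omega)$ with all functions $W$-valued, then the uniform pointwise bound gives $\|w^\alpha - w\|_{L^2}^2 \le 2\max_i|w_i|\,\|w^\alpha - w\|_{L^1} \to 0$, so $w^\alpha \to w$ in $L^2(\Omega)$ and hence $F(w^\alpha) \to F(w)$ by the continuity of $F$ on $L^2(\Omega)$ from \cref{ass:standing}. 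I would also note that, because the values $w_1,\ldots,w_M$ are distinct, $L^1$-convergence of the $W$-valued functions $w^\alpha$ to $w$ forces $\chi_{E_i^\alpha} \to \chi_{E_i}$ in $L^1(\Omega)$ for each $i$ (convergence in measure of $w^\alpha$ transfers to each indicator), so that the scalar results may be invoked componentwise.

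For the $\liminf$-inequality in part 1, the key step is to apply the $\Gamma$-$\liminf$ estimate of Theorem 2 in \cite{ambrosio2010gamma} to each convergent sequence of level sets, yielding $\liminf_{\alpha \nearrow 1}(1-\alpha)P_\alpha(E_i^\alpha) \ge \omega_{d-1}P(E_i;\R^d)$ for every $i$; here I would use that $E_i \subset \Omega$ is bounded, so that $\mathcal{J}^2_s$ vanishes and $L^1(\Omega)$- and $L^1_{loc}(\R^d)$-convergence of the zero-extended indicators coincide. Summing these inequalities against the nonnegative weights $\eta|w_i|$ and using superadditivity of the $\liminf$ gives $\liminf_{\alpha\nearrow 1}\eta R_\alpha(w^\alpha) \ge \eta R(w)$; combining this with the fact that $F(w^\alpha)$ actually converges (so its $\liminf$ is its limit) delivers $\liminf_{\alpha\nearrow 1} J_\alpha(w^\alpha) \ge F(w) + \eta R(w) = J(w)$.

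For the recovery sequence in part 2, the plan is to take the constant sequence $w^\alpha \coloneqq w$, that is $E_i^\alpha \coloneqq E_i$, which trivially is a partition and converges to $w$. If every level set has finite perimeter, the pointwise limit of D\'avila type $(1-\alpha)P_\alpha(E_i) \to \omega_{d-1}P(E_i;\R^d)$ that underlies the $\limsup$ part of Theorem 2 in \cite{ambrosio2010gamma} gives $R_\alpha(w) \to R(w)$ and hence $J_\alpha(w) \to J(w)$, so that $\limsup_{\alpha\nearrow 1}J_\alpha(w^\alpha) = J(w) \le J(w)$. If instead $P(E_i;\R^d) = \infty$ for some $i$, then $J(w) = \infty$ and the inequality holds trivially for this (and any) sequence.

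The step I expect to be the main obstacle is the recovery sequence, because a $\Gamma$-$\limsup$ argument must supply a single family of \emph{admissible} competitors, and admissibility here means that the $M$ level sets jointly partition $\Omega$ for every $\alpha$. This is exactly why I would insist on the constant sequence rather than gluing together the individually recovering, but possibly incompatible, scalar sequences for the separate $E_i$: the constant choice sidesteps the partition constraint entirely, and it is legitimate precisely because the scalar limit is already attained along the constant sequence (the pointwise limit), not merely along some abstract recovering family. The remaining care is bookkeeping, namely the interchange of $\liminf$ with the finite sum and the additive continuous term $F$, and confirming that the normalizing constant in \cite{ambrosio2010gamma} is exactly the $\omega_{d-1}$ fixed in the definition of $R$.
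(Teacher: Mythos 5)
Your $\liminf$ part coincides with the paper's (the paper simply says it ``follows directly'' from Theorem 2 of \cite{ambrosio2010gamma}; your explicit handling of the $F$-term via the uniform $L^\infty$-bound and the componentwise transfer $\chi_{E_i^\alpha}\to\chi_{E_i}$ is the right bookkeeping). For the $\limsup$ part you take a genuinely different route: you propose the constant recovery sequence $w^\alpha\equiv w$, justified by the pointwise limit $(1-\alpha)P_\alpha(E_i)\to\omega_{d-1}P(E_i;\R^d)$ for each finite-perimeter level set, whereas the paper first approximates the Caccioppoli partition $\{E_1,\dots,E_M\}$ by \emph{polyhedral partitions} using \cite{braides2017density}, applies the pointwise convergence only to polyhedral sets (Lemmas 8 and 9 of \cite{ambrosio2010gamma}), and then extracts a diagonal sequence. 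The one point you must repair is the justification of your key step: the pointwise convergence along the constant sequence is \emph{not} a consequence of the $\Gamma$-$\limsup$ part of Theorem 2 in \cite{ambrosio2010gamma} --- a $\Gamma$-$\limsup$ inequality only guarantees the existence of \emph{some} recovery sequence, and that theorem's proof proceeds precisely through polyhedra rather than through a pointwise limit for general sets. What you need is D\'avila's theorem on the BV-extension of the Bourgain--Brezis--Mironescu limit (applicable here because each $E_i\subset\Omega$ is bounded, so the far-field contribution of the kernel is $O(1-\alpha)$); with that separate citation your argument closes. The trade-off is worth noting: your route sidesteps entirely the partition-compatibility issue that forces the paper to invoke the density of polyhedral \emph{partitions} (rather than of polyhedra set by set), and it would also remove the standing assumption that $\Omega$ is polyhedral, which the paper states is used only in this proof; the paper's route, in exchange, stays self-contained relative to \cite{ambrosio2010gamma} and does not require the deeper pointwise convergence result for arbitrary sets of finite perimeter.
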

\begin{proof}
The first claim ($\liminf$-inequality) follows directly from Theorem 2 in \cite{ambrosio2010gamma}.
The second claim ($\limsup$-inequality) is immediate for $R(w) = \infty$. If $R(w) <\infty$, the
partition $\{E_1,\ldots,E_M\}$ is a so-called Caccioppoli partition. 

Then $\{E_1,\ldots,E_M\}$ is a Caccioppoli partition of $\Omega$ and \cite{braides2017density}
asserts that polyhedral partitions are dense in the Caccioppoli partitions, that is there exist
sets $\{T_1^k,\ldots,T_M^k\}_k \subset \R^d$ whose boundaries are composed of finitely many
convex polytopes such that 
\begin{align}
\sum_{i=1}^M w_i \chi_{T_i^k} &\to \sum_{i=1}^M w_i \chi_{E_i} \text{ in } L^1(\Omega) \text{ for } k \to \infty,
\label{eq:l1_approximation}\\
P(T_i^k \cap \Omega;\R^d) &\to P(E_i;\R^d) \text{ for } k \to \infty \text{ and all } i \in \{1,\ldots,M\},
\label{eq:perimeter_approximation}
\end{align}
where the second convergence follows from Corollary 2.5 in 
\cite{braides2017density} in combination with
$P(T_i^k \cap \Omega;\R^d) = \Ha^{d-1}(\Omega \cap \partial^* T_i^k)
+ \Ha^{d-1}(\partial \Omega \cap \overline{T_i^k})$, where
$\partial^* A$ of a set $A \subset \R^d$ denotes its reduced boundary.

Because of \eqref{eq:l1_approximation} and \eqref{eq:perimeter_approximation}, 
we can assume that all $T_i^k$ are contained in a bounded
\emph{hold-all domain}, e.g., a large ball.
Because the sets $T_i^k \cap \Omega$ are polyhedral (recall that we assumed
that $\Omega$ is polyhedral at the beginning of the article),
we can apply Lemmas 8 and 9 in  \cite{ambrosio2010gamma} to them and obtain
\begin{gather}\label{eq:polyhedral_perimeter_recovery}
\lim_{\alpha \nearrow 1} (1 - \alpha) P_\alpha(T_i^k \cap \Omega) = \omega_{n - 1}P(T_i^k \cap \Omega).
\end{gather}
Combining \eqref{eq:perimeter_approximation} and \eqref{eq:polyhedral_perimeter_recovery}, we choose a suitable diagonal sequence
indexed by $(k_n)_n$ of the polyhedral partitions $\{T_1^k \cap \Omega,\ldots,T_M^k \cap \Omega\}_k$ in order to obtain
\[ \lim_{\alpha \nearrow 1} P_\alpha(T_i^{n_k} \cap \Omega; \Omega) = \omega_{n-1} P(E_i;\Omega),
\]
which implies the assertion.
\end{proof}

We note that the proof of \cref{prp:liminf_limsup_ato1} is the only point
in this article, where we use the assumption that the domain $\Omega$ is
polyhedral. As a corollary of the compactness established in 
\cref{prp:compactness_ato1} and the liminf- and limsup-inequalities
established in \cref{prp:liminf_limsup_ato1}, we obtain
the convergence of global minimizers to global minimizers below.

\begin{corollary}
Let \cref{ass:standing} hold. Let $\alpha \nearrow 1$. Let $w^\alpha$ be a global minimizer
of \eqref{eq:p} for $\alpha$. Then there exists a $W$-valued accumulation point $w \in L^1(\Omega)$
with $\tilde{w} \in \BV(\R^d)$, where $\tilde{w}$ is the extension of $w$ by zero outside
of $\Omega$, $w(x) \in W$ a.e., such that $w^\alpha \to w$ in $L^1(\Omega)$ and $J_\alpha(w^\alpha) \to J(w)$.
Moreover, for all accumulation points of $(w^\alpha)_{\alpha}$ are $W$-valued,
global minimizers of \eqref{eq:p0}, and their extensions by zero
$(\tilde{w}^\alpha)_{\alpha}$ outside of $\Omega$ are in $\BV(\R^d)$.
\end{corollary}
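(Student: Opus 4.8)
The plan is to run the standard equicoercivity-plus-$\Gamma$-convergence argument, invoking \cref{prp:compactness_ato1} for compactness and the two inequalities of \cref{prp:liminf_limsup_ato1} as the $\Gamma$-liminf and $\Gamma$-limsup statements. The only input that must be produced by hand is the uniform energy bound that feeds the compactness result; everything afterwards is the routine bookkeeping that turns $\Gamma$-convergence of energies into convergence of minimizers and minimal values.

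First I would secure this uniform bound. Choose any fixed feasible competitor $\bar w \in \calF$ with $J(\bar w) < \infty$; a constant function works, since its only nontrivial level set is the polyhedral domain $\Omega$ itself, so $R(\bar w) = \omega_{d-1}\sum_{i} |w_i| P(E_i;\R^d) < \infty$ and $F(\bar w) \in \R$. Applying the $\limsup$-inequality (part 2 of \cref{prp:liminf_limsup_ato1}) to $\bar w$ yields a recovery sequence $\bar w^\alpha \to \bar w$ in $L^1(\Omega)$ with $\limsup_{\alpha \nearrow 1} J_\alpha(\bar w^\alpha) \le J(\bar w) < \infty$. Because each $w^\alpha$ is a global minimizer of \eqref{eq:p}, we have $J_\alpha(w^\alpha) \le J_\alpha(\bar w^\alpha)$, whence $\limsup_{\alpha \nearrow 1} J_\alpha(w^\alpha) \le J(\bar w) < \infty$. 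In particular $J_\alpha(w^\alpha)$ is bounded for $\alpha$ near $1$, which is exactly the hypothesis of \cref{prp:compactness_ato1}.

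Next I would apply \cref{prp:compactness_ato1} to get relative compactness of $\{w^\alpha\}_\alpha$ in $L^1(\Omega)$ together with a $W$-valued limit point. Fix any accumulation point $w$ and a subsequence with $w^{\alpha_k} \to w$ in $L^1(\Omega)$, so that $w \in \calF$. The $\liminf$-inequality (part 1 of \cref{prp:liminf_limsup_ato1}) gives $J(w) \le \liminf_{k} J_{\alpha_k}(w^{\alpha_k})$. To see that $w$ minimizes \eqref{eq:p0}, take an arbitrary feasible $v$: if $J(v) = \infty$ there is nothing to prove, and otherwise the $\limsup$-inequality supplies a recovery sequence $v^\alpha \to v$ with $\limsup_\alpha J_\alpha(v^\alpha) \le J(v)$. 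Optimality of $w^{\alpha_k}$ gives $J_{\alpha_k}(w^{\alpha_k}) \le J_{\alpha_k}(v^{\alpha_k})$, and chaining yields $J(w) \le \liminf_k J_{\alpha_k}(w^{\alpha_k}) \le \limsup_k J_{\alpha_k}(v^{\alpha_k}) \le J(v)$. Thus $J(w) \le J(v)$ for every feasible $v$, i.e.\ $w$ is a global minimizer of \eqref{eq:p0}. Specializing to $v = w$ and using its own recovery sequence upgrades the chain to $J(w) = \lim_k J_{\alpha_k}(w^{\alpha_k})$, which is the asserted convergence of optimal values.

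Finally, from $J(w) = \lim_k J_{\alpha_k}(w^{\alpha_k}) < \infty$ and $F$ bounded below I would read off $R(w) = \omega_{d-1}\sum_{i=1}^M |w_i| P(E_i;\R^d) < \infty$, so every level set $E_i$ has finite perimeter in $\R^d$; since $w = \sum_i w_i \chi_{E_i}$, its zero-extension $\tilde w$ lies in $\BV(\R^d)$. As every step after the uniform bound was carried out for an \emph{arbitrary} accumulation point and subsequence, the conclusion ``$W$-valued global minimizer of \eqref{eq:p0} with $\tilde w \in \BV(\R^d)$'' holds for all accumulation points. I expect the genuinely delicate point to be the first one, namely guaranteeing the uniform energy bound that activates \cref{prp:compactness_ato1}, which hinges on exhibiting a competitor of finite limiting energy through the $\Gamma$-$\limsup$ construction; the remainder is the standard diagonal-free $\Gamma$-convergence argument.
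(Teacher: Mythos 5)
Your proposal is correct and follows exactly the route the paper intends: the paper states this corollary without a written proof, indicating only that it follows from the compactness in \cref{prp:compactness_ato1} and the $\liminf$/$\limsup$ inequalities of \cref{prp:liminf_limsup_ato1}, which is precisely the equicoercivity-plus-$\Gamma$-convergence argument you carry out (including the competitor-based uniform energy bound, the minimizer-to-minimizer chain, and the $\BV$ membership of the zero extensions). No gaps.
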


\section{Local minimizers and trust-region algorithm}\label{sec:stat_and_tr}
As is noted in \cite{leyffer2022sequential,manns2023on}, it makes sense to consider
local minimizers and stationary points in the settings of \eqref{eq:p} and
\eqref{eq:p0} because $L^1$-neighborhoods of feasible points contain further
feasible points. We briefly translate these concepts from the perimeter-regularized
case, see \cite{manns2023on}, to our setting in \cref{sec:loc_min}.
Then we introduce and analyze non-local variants of the trust-region subproblem
from \cite{manns2023on} in \cref{sec:tr_sub}. We introduce and describe a
trust-region algorithm that builds on these subproblems in \cref{sec:tr_alg}
and prove its asymptotics in \cref{sec:tr_alg_asymptotics}.

\subsection{Local minimizers and stationary points}\label{sec:loc_min}
We start by defining local minimizers and stationary points
and then verify that local minimizers are stationary.
\begin{definition}
Let $\alpha \in (0,1)$ and let $w = \sum_{i=1}^M w_i \chi_{E_i}$ be feasible for \eqref{eq:p}.
\begin{itemize}
\item We say that $w$ is \emph{locally optimal} for \eqref{eq:p} if there is $r > 0$
such that $J_\alpha(w) \le J_\alpha(v)$ for all $v$ that are feasible for \eqref{eq:p}
and satisfy $\|v - w\|_{L^1} \le r$.
\item Let $\alpha \in (0.5,1)$. Let $\nabla F(w) \in C^2(\bar{\Omega})$. We say that $w$
is \emph{stationary} for \eqref{eq:p} if
\begin{gather}\label{eq:stationarity}
\sum_{i=1}^M w_i \left(\int_{E_i} \dvg (\nabla F(w) (x) \phi(x)) \dd x 
+ L_\alpha(E_i,\phi)\right)
= 0
\end{gather}
for all $\phi \in C_c^\infty(\Omega;\R^d)$.
\end{itemize}
\end{definition}
\begin{proposition}\label{prp:minimizers_are_stationary}
Let \cref{ass:standing} hold.
Let $\alpha \in (0.5,1)$. $w = \sum_{i=1}^M w_i \chi_{E_i}$ be locally 
optimal for \eqref{eq:p}. Then $w$ is stationary for \eqref{eq:p}.
\end{proposition}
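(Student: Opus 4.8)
The plan is to argue by contradiction: suppose $w$ is locally optimal but not stationary, so that there exists some $\phi \in C_c^\infty(\Omega;\R^d)$ for which the left-hand side of \eqref{eq:stationarity} is nonzero. Since the expression is linear in $\phi$, replacing $\phi$ by $-\phi$ if necessary, I may assume the quantity
\[
D \coloneqq \sum_{i=1}^M w_i \left(\int_{E_i} \dvg (\nabla F(w)(x)\phi(x))\dd x + L_\alpha(E_i,\phi)\right)
\]
is strictly negative. The goal is then to build a feasible competitor that beats $w$ along the local variation $f_t \coloneqq I + t\phi$, contradicting local optimality.

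First I would set up the competitor $f_t^\#w = \sum_{i=1}^M w_i \chi_{f_t(E_i)}$, which is feasible for all small $t$ by the proposition asserting that $\{f_t(E_1),\ldots,f_t(E_M)\}$ remains a partition of $\Omega$. The next step is to Taylor-expand the objective difference $J_\alpha(f_t^\#w) - J_\alpha(w)$ in $t$. For the regularizer term, I would apply \cref{prp:variation_fractional_perimeter} to each level set, giving
\[
R_\alpha(f_t^\#w) - R_\alpha(w) = t\,\eta(1-\alpha)\sum_{i=1}^M |w_i| L_\alpha(E_i,\phi) + o(t).
\]
For the principal part $F$, I would expand $F(f_t^\#w)$ around $w$ using the twice Fréchet differentiability from \cref{ass:standing}: the first-order term is $\nabla F(w)$ applied to $f_t^\#w - w = \sum_i w_i(\chi_{f_t(E_i)} - \chi_{E_i})$, and \cref{prp:variation_linearized_objective} (with $g = \nabla F(w) \in C^2(\bar\Omega)$, which is exactly why the $C^2$ regularity is imposed in the definition of stationarity) rewrites each inner product as $t\int_{E_i}\dvg(\nabla F(w)(x)\phi(x))\dd x + O(t^2)$. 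The second-order Taylor remainder of $F$ is controlled by the Hessian bound $|\nabla^2 F(\xi)(u,w)| \le C\|u\|_{L^1}\|w\|_{L^1}$; combined with \cref{lem:sym_diff_boundedness}, which gives $\|f_t^\#w - w\|_{L^1} \le \sum_i |w_i|\,\lambda(f_t(E_i)\symdiff E_i) = O(|t|^\alpha)$, the remainder is $O(|t|^{2\alpha})$.

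Collecting these expansions yields
\[
J_\alpha(f_t^\#w) - J_\alpha(w) = t\, D + o(t) + O(|t|^{2\alpha}).
\]
Here is where $\alpha > 1/2$ enters decisively: since $2\alpha > 1$, the $O(|t|^{2\alpha})$ term is dominated by the linear term $tD$ as $t \to 0^+$, so for all sufficiently small $t > 0$ the sign of the difference is governed by $tD < 0$. Thus $J_\alpha(f_t^\#w) < J_\alpha(w)$ for small $t>0$, while $\|f_t^\#w - w\|_{L^1} = O(|t|^\alpha) \to 0$ places the competitor inside any $L^1$-neighborhood of $w$, contradicting local optimality. I expect the main obstacle to be the bookkeeping that confirms the Hessian remainder is genuinely $O(|t|^{2\alpha})$ rather than merely $O(|t|^\alpha)$ — one must apply the $L^1$-norm Hessian estimate to the \emph{second-order} remainder, whose argument $f_t^\#w - w$ contributes the norm quadratically, and verify that the intermediate evaluation points stay in $L^2(\Omega)$ so that \cref{ass:standing} applies. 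This quadratic gain in the perturbation size is precisely what makes the restriction $\alpha \in (0.5,1)$ necessary and is the crux of the argument.
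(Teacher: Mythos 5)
Your proposal is correct and follows essentially the same route as the paper: both expand $J_\alpha(f_t^{\#}w)$ along the local variation $f_t = I + t\phi$ using \cref{prp:variation_fractional_perimeter} and \cref{prp:variation_linearized_objective} for the first-order terms, control the Hessian remainder via the $L^1$-bound in \cref{ass:standing} together with \cref{lem:sym_diff_boundedness} to get $O(|t|^{2\alpha})$, and invoke $\alpha>1/2$ so that this remainder is $o(t)$ (the paper phrases the conclusion as ``the derivative at the interior minimum $t=0$ vanishes'' rather than as a contradiction with a descent direction, which is an immaterial difference). The only blemish is a stray factor $\eta$ inside your expansion of $R_\alpha(f_t^{\#}w)-R_\alpha(w)$; the constants $\eta(1-\alpha)|w_i|$ versus $w_i$ in front of $L_\alpha$ do not affect the argument.
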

\begin{proof}
Let $\phi \in C_c^\infty(\Omega,\R^d)$. Let $(f_t)_{t \in (-\varepsilon,\varepsilon)}$
for some $\varepsilon > 0$ be the local variation defined by $f_t \coloneqq I + t \phi$.
the function $t \mapsto F(f_t^{\#}w) + \eta R_\alpha( f_t^{\#}w )$
is differentiable at $t = 0$, which can be seen as follows.
\Cref{ass:standing} implies that
\[ \frac{\dd}{\dd t}F(f_t^{\#}w)\Big|_{t = 0}
   = \frac{\dd}{\dd t} (\nabla F(w), f_{t}^{\#}w)_{L^2}\Big|_{t = 0}
   + \frac{\dd}{\dd t} \underbrace{\nabla^2F(\xi^t)(f_t^{\#}w - w, f_t^{\#}w - w)}_{\eqqcolon r_t}
   \Big|_{t = 0}
\]
for some $\xi^t$ in the line segment between $w$ and $f_t^{\#}w$.
\Cref{ass:standing} and \cref{lem:sym_diff_boundedness} imply
\[ \left|
\nabla^2F(\xi^t)(f_t^{\#}w - w, f_t^{\#}w - w)
\right| \le C {|t|}^{2\alpha} \sum_{i=1}^M P_\alpha(E_i;\Omega)
\]
for some large enough $C > 0$ and all $t \in (-\varepsilon,\varepsilon)$.
Because $\alpha > 0.5$, we obtain that $r_t$ is differentiable at $t = 0$
with value zero so that
\[ \frac{\dd}{\dd t}F(f_t^{\#}w)\Big|_{t = 0}
   = \frac{\dd}{\dd t} (\nabla F(w), f_{t}^{\#}w)_{L^2}\Big|_{t = 0}. \]
Then \eqref{eq:stationarity} follows from  
\cref{prp:variation_linearized_objective,prp:variation_fractional_perimeter}.   
\end{proof}

\subsection{Trust-region subproblems}\label{sec:tr_sub}
We introduce trust-region and analyze subproblems by following the ideas from
\cite{leyffer2022sequential,manns2023on}, that is the principal
part of the objective enters the trust-region subproblem by means of a
linear model and the regularization term is considered exactly.
We analyze $\Gamma$-convergence of the trust-region subproblems
with respect to convergence of the linearization point, and provide
optimality conditions for the trust-region subproblem.

Let $\Delta > 0$ and $\bar{w}$ be feasible for \eqref{eq:p}
with $R_\alpha(\bar{w}) < \infty$. The trust-region subproblem reads
\begin{gather}\label{eq:tr}
\text{{\ref{eq:tr}}}(\bar{w}, g, \Delta) \coloneqq
\left\{
\begin{aligned}
\min_{w \in L^1(\Omega)}\ & (g, w - \bar{w})_{L^2} + \eta R_\alpha(w)- \eta R_\alpha (\bar{w})\\
\text{s.t.}\quad & \|w - \bar{w}\|_{L^1} \le \Delta,\\
                 & w(x) \in W \text{ for a.e.\ } x \in \Omega,
\end{aligned}
\right.
\tag{TR$_\alpha$}
\end{gather}
where we recover the linearized principal part of the objective of
\eqref{eq:p} with the choice $g = \nabla F(\bar{w})$.
The trust-region subproblem $\text{{\ref{eq:tr}}}(\bar{w},g,\Delta)$ admits a
minimizer, which we briefly show below.
\begin{proposition}\label{prp:tr_well_defined}
	Let $\bar{w}$ be feasible for \eqref{eq:p} with $R_\alpha(\bar{w}) < \infty$,
	$g \in L^2(\Omega)$, and $\Delta \ge 0$.
	Then $\text{\emph{\ref{eq:tr}}}(\bar{w},g,\Delta)$ admits a minimizer.
\end{proposition}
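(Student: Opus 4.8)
The plan is to apply the direct method of the calculus of variations, following the same template as the existence proof for \eqref{eq:p} in \cref{prp:existence}, since the trust-region subproblem has exactly the same feasible-set structure plus one additional convex constraint. First I would argue that the feasible set is nonempty and the objective is bounded below: the point $\bar w$ itself is feasible (it satisfies $\|\bar w - \bar w\|_{L^1} = 0 \le \Delta$ and is $W$-valued), so the problem is not vacuous; and since $R_\alpha \ge 0$ and the linear term $(g, w - \bar w)_{L^2}$ is bounded on the $L^1$-ball intersected with $\calF$ (because $\calF$ consists of $W$-valued functions, hence $w$ is uniformly bounded in $L^\infty$ and thus $\|w\|_{L^2}$ is controlled, making $(g, w-\bar w)_{L^2}$ finite and bounded below), there is a real infimum. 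I would then take a minimizing sequence $w^n = \sum_{i=1}^M w_i \chi_{E_i^n}$ with associated partitions.

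Next I would extract compactness. Along the minimizing sequence the objective values are bounded above, $R_\alpha(w^n) \le R_\alpha(\bar w) + C$ for some constant $C$, which forces $\sup_n P_\alpha(E_i^n) < \infty$ for each $i \in \{1,\ldots,M\}$. As in \cref{prp:existence}, the sequential weak-$^*$ compactness properties of the relevant fractional Sobolev setting (via \cite[Corollary 4.6 and Proposition 4.8]{comi2019distributional}) yield a limit $w = \sum_{i=1}^M w_i \chi_{E_i}$ with $\chi_{E_i^n} \to \chi_{E_i}$ in $L^1(\Omega)$ for every $i$, and passing to a further subsequence gives pointwise a.e.\ convergence, so the limit sets form a partition up to a null set and $w \in \calF$.

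It then remains to verify that $w$ is feasible for the subproblem and that the objective is lower semicontinuous along the sequence. Feasibility of the trust-region constraint is inherited in the limit: since $w^n \to w$ in $L^1(\Omega)$, we have $\|w - \bar w\|_{L^1} = \lim_n \|w^n - \bar w\|_{L^1} \le \Delta$, so $w$ satisfies the ball constraint. For the objective, the linear term $(g, w^n - \bar w)_{L^2}$ converges because the uniform $L^\infty$-bound on $W$-valued functions upgrades $L^1$-convergence to $L^2$-convergence via dominated convergence (exactly as in \cref{prp:existence}); and $R_\alpha$ is lower semicontinuous under $L^1$-convergence by \cref{lem:Ralpha_lsc}. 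Combining these, the subproblem objective is lower semicontinuous at $w$, and since $w^n$ was minimizing, $w$ attains the infimum.

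The main obstacle, as in \cref{prp:existence}, is purely the compactness step: there is no Banach--Alaoglu theorem available for $W^{\alpha,1}(\R^d)$ directly, so one genuinely relies on the restriction to the finite-valued feasible set $\calF$ and the bound on the individual fractional perimeters $P_\alpha(E_i^n)$ to recover a convergent subsequence. The remaining verifications---feasibility of the trust-region ball under $L^1$-convergence and lower semicontinuity of the objective---are routine given \cref{lem:Ralpha_lsc} and the uniform boundedness of $W$-valued functions.
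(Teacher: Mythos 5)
Your proposal is correct and takes essentially the same route as the paper, which simply observes that the linear term $(g,w-\bar w)_{L^2}$ is bounded below and continuous along $L^1$-convergent sequences in $\calF$, that $\eta R_\alpha(\bar w)$ is a constant, and that the feasible set is nonempty, and then invokes (the proof of) \cref{prp:existence} as a corollary. The only difference is that you run the direct method explicitly and in addition verify the closedness of the trust-region ball under $L^1$-convergence, a point the paper leaves implicit.
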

\begin{proof}
Because $g \in L^2(\Omega)$ and $w$, $\bar{w} \in L^\infty(\Omega)$,
the first term of the objective of  $\text{{\ref{eq:tr}}}(\bar{w},g,\Delta)$
is bounded below.
Because of the $L^\infty(\Omega)$-bounds ($W$ is a finite set), convergence
in $L^1(\Omega)$ of feasible points implies convergence in $L^2(\Omega)$
and we obtain continuity of the first term of the objective if a sequence
of feasible points converges in $L^1(\Omega)$. Moreover, the term
$\eta R_\alpha (\bar{w})$ is constant. Consequently, the assumptions
of \cref{prp:existence} are satisfied on the non-empty feasible set
of $\text{{\ref{eq:tr}}}(\bar{w},g,\Delta)$. Thus the existence
of solutions to \eqref{eq:p} follows as a corollary from (the proof of)
\cref{prp:existence}.
\end{proof}
Moreover, if the linearization point minimizes $\text{{\ref{eq:tr}}}(w,\nabla F(w),\Delta)$,
it is stationary for \eqref{eq:p} too.
\begin{proposition}\label{prp:tr_stationary}
Let $\alpha \in (0.5,1)$. Let $\{E_1,\ldots,E_M\}$ be a partition of $\Omega$
such that $w = \sum_{i=1}^M w_i \chi_{E_i}$ satisfies $R_\alpha(w) < \infty$.
Let $\nabla F(w) \in C^2(\bar{\Omega})$. If $w$ is locally optimal
for $\text{\emph{\ref{eq:tr}}}(w,\nabla F(w),\Delta)$ for some $\Delta > 0$, then $w$ is
stationary for \eqref{eq:p}.
\end{proposition}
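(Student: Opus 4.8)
The plan is to mirror the argument of \cref{prp:minimizers_are_stationary}, but to replace the genuine objective by the model objective of the subproblem and to derive a two-sided inequality from local optimality along local variations. First I would unwind the definition of local optimality for \eqref{eq:tr} with linearization point $\bar w = w$ and $g = \nabla F(w)$: there is an $r > 0$ such that every competitor $v$ that is $W$-valued, whose level sets partition $\Omega$, and that satisfies $\|v - w\|_{L^1} \le \min\{r,\Delta\}$ obeys
\begin{gather*}
(\nabla F(w), v - w)_{L^2} + \eta\bigl(R_\alpha(v) - R_\alpha(w)\bigr) \ge 0,
\end{gather*}
because the constant $-\eta R_\alpha(w)$ cancels and the objective value at the feasible point $v = w$ is $0$.

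Next I would fix $\phi \in C_c^\infty(\Omega;\R^d)$ and feed the local variation $f_t \coloneqq I + t\phi$ into this inequality through the competitor $v_t \coloneqq f_t^{\#}w = \sum_{i=1}^M w_i \chi_{f_t(E_i)}$. By \cref{prp:elementary_local_variation_properties} the family $(f_t)$ is a local variation in $\Omega$, so each $v_t$ is again $W$-valued and its level sets $f_t(E_i)$ partition $\Omega$; hence $v_t$ is feasible for \eqref{eq:tr}. The admissibility inside the trust region is the one quantitative point: \cref{lem:sym_diff_boundedness} gives $\lambda(f_t(E_i)\symdiff E_i) \le L|t|^\alpha P_\alpha(E_i)$, whence $\|v_t - w\|_{L^1} \le \sum_{i=1}^M |w_i| L |t|^\alpha P_\alpha(E_i) = O(|t|^\alpha)$, a finite quantity since $R_\alpha(w) < \infty$. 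Thus for all sufficiently small $|t|$ the competitor $v_t$ meets both $\|v_t - w\|_{L^1} \le \Delta$ and $\|v_t - w\|_{L^1} \le r$, so the variational inequality applies to $v = v_t$ for $t$ of either sign.

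Then I would insert the first-order expansions. \Cref{prp:variation_linearized_objective}, applied to each $E_i$ with the $C^2$-regular field $g = \nabla F(w)$, yields $(\nabla F(w), \chi_{f_t(E_i)} - \chi_{E_i})_{L^2} = t\int_{E_i}\dvg(\nabla F(w)\,\phi)\dd x + O(t^2)$, while \cref{prp:variation_fractional_perimeter}, applied to each $E_i$ (legitimate since $P_\alpha(E_i) < \infty$), gives $P_\alpha(f_t(E_i)) - P_\alpha(E_i) = t L_\alpha(E_i,\phi) + o(t)$. Summing against the coefficients $w_i$ and $\eta(1-\alpha)|w_i|$ and collecting all remainders, the variational inequality becomes
\begin{gather*}
t\,S(\phi) + o(t) \ge 0, \qquad S(\phi) \coloneqq \sum_{i=1}^M w_i \int_{E_i}\dvg(\nabla F(w)\,\phi)\dd x + \eta(1-\alpha)\sum_{i=1}^M |w_i| L_\alpha(E_i,\phi);
\end{gather*}
here $S(\phi)$ is precisely the first-order coefficient whose vanishing is the stationarity identity \eqref{eq:stationarity}, identical to the one produced in \cref{prp:minimizers_are_stationary}. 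Dividing by $t > 0$ and letting $t \searrow 0$ gives $S(\phi) \ge 0$; dividing by $t < 0$ and letting $t \nearrow 0$ gives $S(\phi) \le 0$. Hence $S(\phi) = 0$ for every $\phi$, which is \eqref{eq:stationarity}.

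The main obstacle is bookkeeping rather than a conceptual difficulty: one must check that $v_t$ remains inside \emph{both} the trust-region radius $\Delta$ and the local-optimality radius $r$ uniformly as $t \to 0$, and the H\"older-in-$t$ bound of \cref{lem:sym_diff_boundedness} is exactly tailored for this. I would emphasize that, in contrast to \cref{prp:minimizers_are_stationary}, the principal part enters \eqref{eq:tr} linearly, so there is no Hessian remainder $r_t = O(|t|^{2\alpha})$ to absorb; consequently the hypothesis $\alpha \in (0.5,1)$ is invoked here only to make sense of the notion of stationarity from the definition, not for any quantitative estimate in the proof.
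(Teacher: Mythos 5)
Your proof is correct and follows essentially the same route as the paper: the paper simply packages the argument by applying \cref{prp:minimizers_are_stationary} to the auxiliary functional $\tilde{F}(v) \coloneqq (\nabla F(w), v)_{L^2}$ (which satisfies \cref{ass:standing} with vanishing Hessian), whereas you inline that same local-variation computation, with the added care of checking explicitly that the competitors $f_t^{\#}w$ stay inside both the trust region and the local-optimality radius via \cref{lem:sym_diff_boundedness}. Your closing observation that the linear model removes the Hessian remainder, so $\alpha>0.5$ is not needed quantitatively here, is consistent with the paper's reduction.
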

\begin{proof}
Let $g \coloneqq \nabla F(w)$. We choose $\tilde{F}(v) \coloneqq (g, v)_{L^2}$ for $v \in L^2(\Omega)$ and
obtain $\nabla \tilde{F}(v) = g$ and $\nabla^2 \tilde{F}(v) = 0$ so that $\tilde{F}$ satisfies \cref{ass:standing}
on $\calF$. We apply \cref{prp:minimizers_are_stationary} with $\tilde{F}$ for $F$ and obtain that
$w$ is stationary and satisfies \eqref{eq:stationarity} with $\nabla \tilde{F}(w) = g = \nabla F(w)$, which
means that $w$ is also stationary for \eqref{eq:p}.
\end{proof}
Next, we analyze $\Gamma$-convergence of the trust-region subproblems with respect to
strict and pseudo-weak-$^*$ convergence of feasible points of \eqref{eq:p}. As in
\cite{manns2023on}, this will be a key ingredient of our convergence analysis of our
trust-region algorithm.
\begin{theorem}\label{thm:tr_gamma_convergence}
	Let $\alpha \in (0,1)$.
	Let $v^n \to v$ strictly in $\calF$. Let $g^n \weakto g$ in $L^2(\Omega)$.
	Let $\Delta > 0$. Then the functionals $T^n : (\calF, \text{\emph{pseudo-weak}-$^*$}) \to \R$, defined as
	\[ 
	T^n(w) \coloneqq (g^n, w - v^n)_{L^2} + \eta R_\alpha(w) - R_\alpha(v^n) +
	\delta_{[0,\Delta]}(\|w - v^n\|_{L^1})
	\]
	for $w \in \calF$, $\Gamma$-converge to $T :  (\calF, \text{\emph{pseudo-weak}-$^*$}) \to \R$, defined as
	\[
	T(w) \coloneqq (g, w - v)_{L^2} + \eta R_\alpha(w) - R_\alpha(v)  +
	\delta_{[0,\Delta]}(\|w - v\|_{L^1})
	\]
	for $w \in \calF$, where $\delta_{[0,\Delta]}$ is the $\{0,\infty\}$-valued indicator function of $[0,\Delta]$.
\end{theorem}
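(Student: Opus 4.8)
**The plan is to prove the two Γ-convergence inequalities directly, using the strict convergence $v^n \to v$ to control the perturbation terms.** To establish Γ-convergence of $T^n$ to $T$ in the pseudo-weak-$^*$ topology, I must verify two conditions. First, the \emph{liminf inequality}: for every sequence $w^n \pweakstarto w$ (pseudo-weak-$^*$ in $\calF$) I must show $T(w) \le \liminf_{n\to\infty} T^n(w^n)$. Second, the \emph{recovery sequence} (limsup) condition: for every $w \in \calF$ there exists $w^n \pweakstarto w$ with $\limsup_{n\to\infty} T^n(w^n) \le T(w)$.

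\textbf{For the liminf inequality}, I would treat each summand of $T^n$ separately. The term $(g^n, w^n - v^n)_{L^2}$ converges to $(g, w - v)_{L^2}$: since $w^n \to w$ and $v^n \to v$ in $L^1(\Omega)$ with uniform $L^\infty$ bounds (as $W$ is finite), we get convergence in $L^2(\Omega)$, and combining this strong convergence with the weak convergence $g^n \weakto g$ yields convergence of the inner products. The constant term $-R_\alpha(v^n) \to -R_\alpha(v)$ directly by the strict convergence hypothesis. The term $\eta R_\alpha(w^n)$ is handled by the lower semicontinuity from \cref{lem:Ralpha_lsc}, giving $\eta R_\alpha(w) \le \liminf_n \eta R_\alpha(w^n)$. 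For the indicator term $\delta_{[0,\Delta]}(\|w^n - v^n\|_{L^1})$, I would use that $\|w^n - v^n\|_{L^1} \to \|w - v\|_{L^1}$ (again from $L^1$ convergence of both sequences), so the indicator constraint passes to the limit: if the indicator is finite along a subsequence then the limiting norm satisfies the constraint, and otherwise the liminf is $+\infty$ and the inequality is trivial. Summing these componentwise bounds gives the liminf inequality.

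\textbf{For the recovery sequence}, the natural and essentially only candidate is the \emph{constant sequence} $w^n \coloneqq w$ for all $n$ (this is the standard trick when the target functional differs from the approximating ones only through the parameters $v^n, g^n$). With this choice, $\eta R_\alpha(w^n) = \eta R_\alpha(w)$ is constant, so no issue arises there and in particular $\sup_n P_\alpha(E_i^n) < \infty$ trivially, ensuring $w^n \pweakstarto w$. The inner-product term $(g^n, w - v^n)_{L^2} \to (g, w - v)_{L^2}$ and the constant term $-R_\alpha(v^n) \to -R_\alpha(v)$ exactly as above. The only delicate point is the indicator: I need $\limsup_n \delta_{[0,\Delta]}(\|w - v^n\|_{L^1}) \le \delta_{[0,\Delta]}(\|w - v\|_{L^1})$. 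Since $\|w - v^n\|_{L^1} \to \|w - v\|_{L^1}$, this is automatic whenever $\|w - v\|_{L^1} < \Delta$ (strict feasibility), as eventually $\|w - v^n\|_{L^1} \le \Delta$. If $\|w - v\|_{L^1} > \Delta$ the right-hand side is $+\infty$ and there is nothing to prove.

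\textbf{The main obstacle is the boundary case $\|w - v\|_{L^1} = \Delta$ of the recovery sequence.} Here the target value $T(w)$ is finite (the indicator is $0$), but the approximating norms $\|w - v^n\|_{L^1}$ may oscillate slightly above $\Delta$, making $\delta_{[0,\Delta]}(\|w - v^n\|_{L^1}) = +\infty$ for infinitely many $n$ and thus $\limsup_n T^n(w^n) = +\infty > T(w)$. The constant recovery sequence then fails, and I would need to perturb $w$ slightly toward $v$ to regain strict feasibility. The standard remedy is a diagonal argument: for each small $\tau \in (0,1)$ construct a feasible $W$-valued function $w_\tau$ with $\|w_\tau - v\|_{L^1} < \Delta$, $w_\tau \to w$ in $L^1$ and $R_\alpha(w_\tau) \to R_\alpha(w)$ as $\tau \to 0$; then use the constant sequence $w_\tau$ for each fixed $\tau$, and extract a diagonal sequence using the lower semicontinuity and the metrizability of the relevant convergence on sublevel sets. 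Producing such an inward-perturbed $W$-valued $w_\tau$ with controlled fractional perimeter is the genuinely nontrivial step, since one cannot simply scale toward $v$ while staying in $\calF$; I would likely follow the construction in the analogous argument of \cite{manns2023on}, replacing its perimeter estimates with the fractional-perimeter submodularity from \cref{lem:submodularity} and the lower semicontinuity from \cref{lem:Ralpha_lsc}.
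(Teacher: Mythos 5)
Your overall architecture coincides with the paper's: the liminf inequality is handled termwise exactly as in the paper (strong $L^2$ convergence of $w^n$ and $v^n$ from the uniform $L^\infty$ bounds paired with $g^n \weakto g$, strict convergence for the constant term $-R_\alpha(v^n)$, \cref{lem:Ralpha_lsc} for $\eta R_\alpha(w^n)$, and passage to the limit in the norm constraint), and your case split for the recovery sequence ($\|w-v\|_{L^1} > \Delta$, $< \Delta$, $= \Delta$) with the constant sequence in the first two cases is precisely what the paper does. You also correctly diagnose that the entire difficulty sits in the boundary case $\|w - v\|_{L^1} = \Delta$.

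However, in that boundary case you stop at announcing that an inward-perturbed $W$-valued $w_\tau$ must be constructed and defer the construction to the literature; this is the genuinely nontrivial step and it is left undone, so the proof is incomplete. For the record, the paper's construction is direct rather than a diagonal argument over a continuous parameter $\tau$: since $\Delta > 0$, the set $D = \{x \in \Omega : v(x) = w_1 \neq w_2 = w(x)\}$ has positive measure (after relabeling), so it has a point $\bar x$ of density one; setting $\kappa^n = \|v^n - v\|_{L^1}$ one chooses radii $r^n \searrow 0$ with $\lambda(D \cap B_{r^n}(\bar x)) \ge \kappa^n$ and defines $w^n$ to equal $v$ on $B_{r^n}(\bar x)$ and $w$ elsewhere. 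The triangle inequality then gives $\|w^n - v^n\|_{L^1} \le \kappa^n + \Delta - \lambda(D \cap B_{r^n}(\bar x)) \le \Delta$, i.e.\ feasibility for the indicator, which is exactly the point your constant sequence cannot deliver. The perimeter control you gesture at is then made quantitative via the submodularity inequalities \eqref{eq:union_intersection_inequality} and \eqref{eq:set_subtraction_inequality} together with the scaling $P_\alpha(B_{r}(\bar x)) = r^{d-\alpha} P_\alpha(B_1(\bar x))$, yielding $\limsup_n P_\alpha(E_i^n) \le P_\alpha(E_i)$ and hence, with \cref{lem:Ralpha_lsc}, $R_\alpha(w^n) \to R_\alpha(w)$. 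Without supplying a construction of this kind (the density-point/ball-replacement step and the quantitative matching of the perturbation's measure to $\kappa^n$), the limsup inequality in the critical case remains unproved.
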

\begin{proof}
We follow the proof strategy of \cite[Theorem 5.2]{manns2023on}.

\textbf{Part 1: Lower bound inequality. $T(w) \le \liminf_{n \to \infty} T^n(w^n)$
for $w^n \pweakstarto w$ in $\calF$.}\quad Because of the uniform $L^\infty(\Omega)$-bounds on $\calF$,
we obtain $w^n \to w$ in $L^2(\Omega)$ and $v^n \to v$ in $L^2(\Omega)$. In combination with
$g^n \weakto g$ in $L^2(\Omega)$, we obtain $(g^n, w^n - v^n)_{L^2} \to (g, w - v)_{L^2}$.

Because $w^n \pweakstarto w$ in $\calF$ and $v^n \to v$ strictly in $\calF$, we obtain
with the help of \cref{lem:Ralpha_lsc} that
$R_\alpha(w) - R_\alpha(v) \le \liminf_{n \to \infty} R_\alpha(w^n) - R_\alpha(v^n)$.
Moreover, if $\|w^{n_k} - v^{n_k}\|_{L^1} \le \Delta$ holds for an infinite subsequence,
then the convergence of $\{w^n\}_n$ and $\{v^n\}_n$ in $L^1(\Omega)$ and the triangle inequality
imply $\|w - v\|_{L^1} \le \Delta$ so that the last term of $T$ is zero and the lower bound
inequality is satisfied. If there is no such subsequence, then $T^n \equiv \infty$ and the
lower bound inequality holds trivially.

\textbf{Part 2: Upper bound inequality.} For each $w \in \calF$ with $R_\alpha(w)  < \infty$,
	there exists a sequence $w^n \pweakstarto w$ in $\calF$ such that
	$T(w) \ge \limsup_{n \to \infty} T^n(w^n)$.\quad We make a case distinction on the
	possible values of the norm difference $\|w - v\|_{L^1}$.

\textbf{Case 2a $\|w - v\|_{L^1} > \Delta$:}\quad
Then $T(w) = \infty$ and we can choose $w^n \coloneqq w$ for all $n \in \N$.

\textbf{Case 2b $\|w - v\|_{L^1} < \Delta$:}\quad
We choose again $w^n \coloneqq w$ for all $n \in \N$. We obtain
\begin{multline*}
(g^n, w^n - v^n)_{L^2} + \eta R_\alpha(w^n) - \eta R_\alpha(v^n) \\
\to (g, w - v)_{L^2} + R_\alpha(w) - R_\alpha(v) + \underbrace{\delta_{[0,\Delta]}(\|w - v\|_{L^1})}_{=0}.
\end{multline*}
The convergence of $\{w^n\}_n$ and $\{v^n\}_n$ in $L^1(\Omega)$ and the triangle inequality
imply that $\|w^n - v^n\|_{L^1} \le \Delta$ holds for all large enough $n \in \N$. Consequently,
$T^n(w^n) \to T(w)$.
	
\textbf{Case 2c $\|w - v\|_{L^1} = \Delta$:}\quad Because $\Delta > 0$, there exists a set
\[ D \coloneqq \{ x \in \Omega\,|\, v(x) = w_1 \neq w_2 = w(x) \} \]
with $\lambda(D) > 0$. We note that the specific values $w_1$ and $w_2$ are without loss of generality
because we may reorder the indices of the elements of $W$ as necessary. The set $D$ satisfies
\begin{align*}
P_\alpha(D) 
&= P_\alpha(v^{-1}(\{w_1\}) \cap w^{-1}(\{w_2\})) \\
&\le P_\alpha(v^{-1}(\{w_1\})) + P_\alpha(w^{-1}(\{w_2\}))\\
&\le R_\alpha(v) + R_\alpha(w) < \infty,
\end{align*}
where the first inequality follows from  \eqref{eq:union_intersection_inequality}.
The second inequality follows from the fact that at most one of the $w_i$ can be zero.
Because $D$ has strictly positive Lebesgue measure $\lambda(D) > 0$, it has a point of density $1$,
that is there exists $\bar{x} \in D$ such that
\begin{gather}\label{eq:pt_of_density_one}
\lim_{r \searrow 0} \frac{\lambda(D \cap B_r(\bar{x}))}{\lambda(B_r(\bar{x}))} = 1.
\end{gather}
We define $\kappa^n \coloneqq \|v^n - v\|_{L^1}$ for $n \in \N$. Because of \eqref{eq:pt_of_density_one}, there
exist a sequence $(r^n)_n$ and $n_0 \in \N$ such that
$r^n \searrow 0$ and for all $n \ge n_0$:
\[ \lambda(D \cap B_{r^n}(\bar{x})) \ge \kappa^n \ge \frac{1}{2}\lambda(B_{r^n}(\bar{x}))
\]
and $B_{r^n}(\bar{x}) \subset \Omega$. We now restrict to $n \ge n_0$ and
define $w^n$ by
\[ w^n(x) \coloneqq \left\{ 
\begin{aligned}
v(x) &\text{ if } x \in B_{r^n}(\bar{x}),\\
w(x) &\text{ else}
\end{aligned}
\right.
\]
for a.e.\ $x \in \Omega$. This gives
\begin{align*}
 \|w^n - v^n\|_{L^1} 
 &\le \|v - v^n\|_{L^1} + \|w^n - v\|_{L^1} \\
 &\le \kappa^n + \|w - v\|_{L^1(\Omega\setminus B_{r^n}(\bar{x}))} \\
 &\le \kappa^n + \Delta - \underbrace{|w_1 - w_2|}_{\ge 1}\underbrace{\lambda(D \cap B_{r^n}(\bar{x}))}_{\ge \kappa^n} 
      \underbrace{- \|w - v\|_{L^1({B_{r^n}(\bar{x}) \setminus D})}}_{\le 0} \le \Delta.
\end{align*}
The construction of the $w^n$ implies $w^n \pweakstarto w$ in $\calF$. In order to
obtain the $\limsup$-inequality, we show $R_\alpha(w^n) \to R_\alpha(w)$.
To this end, let $E_i \coloneqq w^{-1}(\{w_i\})$, $E_i^n \coloneqq (w^n)^{-1}(\{w_i\})$
for $i \in \{1,\ldots,M\}$ and $n \ge n_0$.
Then $E_1^n = E_1 \cup B_{r^n}(\bar{x})$ and
$E_i^n = E_i \setminus B_{r^n}(\bar{x})$ for $i \geq 2$.
For $\{E_1^n\}_n$, we deduce by means of \eqref{eq:union_intersection_inequality}
\begin{align*}
P_\alpha(E_1^n) &\le P_\alpha(E_1) + P_\alpha(B_{r^n}(\bar{x}))\\
&\le P_\alpha(E_1) + (r^{n})^{d - \alpha}P_\alpha(B_{1}(\bar{x}))
\to P_\alpha(E_1)
\end{align*}
and, analogously, by means of \eqref{eq:set_subtraction_inequality}
\begin{align*}
P_\alpha(E_i^n) \le P_\alpha(E_i) + (r^{n})^{d - \alpha}P_\alpha(B_{1}(\bar{x}))
\to P_\alpha(E_i)
\end{align*}
for all $i \in \{1,\ldots,M\}$. Summing the terms, we obtain 
$R_\alpha(w^n) \to R_\alpha(w)$.
\end{proof}

\subsection{Trust-region algorithm}\label{sec:tr_alg}
We propose to solve \eqref{eq:p} for locally optimal or stationary points with a variant of the
trust-region algorithm that is proposed and analyzed in \cite{leyffer2022sequential,manns2023on}.
It is stated as \cref{alg:trm} below and consists of two loops. The outer loop is indexed by
$n$ and in each iteration of the outer loop a new feasible iterate $w^n \in \calF$ with
$R_\alpha(w^n) < \infty$ is computed that improves acceptably over the previous iterate $w^{n-1}$.
An acceptable improvement is achieved if the new iterate $w^n$ satisfies
\begin{gather}\label{eq:accept}
\ared(w^{n-1},w^n) \ge \sigma \pred(w^{n-1},\Delta^{n,k})
\end{gather}
for a fixed $\sigma \in (0,1)$ and the trust-region radius $\Delta^{n,k}$ that is determined by the inner
loop (see below). In \eqref{eq:accept}, the left hand side is defined by
\[ \ared(w^{n-1}, w) \coloneqq F(w^{n-1}) + \eta R_\alpha(w^{n-1}) - F(w) - \eta R_\alpha(w) \]
for $w \in \calF$ and is the \emph{actual reduction} of the objective that is achieved by
$w$. The right hand side is the \emph{predicted reduction} that is achieved by the solution
$\tilde{w}^{n,k}$ of the trust-region subproblem $\text{{\ref{eq:tr}}}(w^{n-1}, \nabla F(w^{n-1}), \Delta^{n,k})$
\[ \pred(w^{n-1},\Delta^{n,k}) \coloneqq (\nabla F(w^{n-1}),w^{n - 1} - \tilde{w}^{n,k})
   + \eta R_\alpha(w^{n-1}) - \eta R_\alpha(\tilde{w}^{n,k})
\]
is the predicted reduction by the (negative objective of the) trust-region 
subproblem for the current trust-region radius and thus its solution 
$\tilde{v}^{n,k}$.

To this end, the inner loop, indexed by $k$, starts from the reset trust-region radius $\Delta^{n,0} = \Delta^0$ and
solves the trust-region subproblems $\text{{\ref{eq:tr}}}(w^{n-1}, \nabla F(w^{n-1}), \Delta^{n,k})$ with
linearization (model) point $w^{n-1}$. If the solution of the trust-region subproblem $\tilde{w}^{n,k}$
satisfies \eqref{eq:accept}, the new iterate $w^n$ is set as $\tilde{w}^{n,k}$ and the inner loop terminates.
Else, the trust-region radius is halved and the next iteration of the inner loop begins.
If the $\pred(w^{n-1},\Delta^{n,k}) = 0$, $w^{n-1}$ is a minimizer of the trust-region subproblem for
a positive trust-region radius and thus stationary for \eqref{eq:p} by virtue of \cref{prp:tr_stationary}
if $\alpha \in (0.5,1)$. In this case, \cref{alg:trm} terminates.

\begin{algorithm}[t]
	\caption{Trust-region Algorithm leaning on SLIP from \cite{leyffer2022sequential,manns2023on}}\label{alg:trm}
	\textbf{Input:}
	$\alpha \in (0,1)$,
	$F$ sufficiently regular,
	$\Delta^0 > 0$, $w^0 \in \calF$ with $R_\alpha(w^n) < \infty$, $\sigma \in (0,1)$.
	
	\begin{algorithmic}[1]
		\For{$n = 0,\ldots$}
		\State $k \gets 0$
		\State $\Delta^{n,0} \gets \Delta^0$
		\While{not sufficient decrease according to \eqref{eq:accept}}\label{ln:suffdec}
		\State\label{ln:trstep} $\tilde{w}^{n,k} \gets$ minimizer of $\text{{\ref{eq:tr}}}(w^{n-1}, \nabla F(w^{n-1}), \Delta^{n,k})$.
		\State\label{ln:pred} $\pred(w^{n-1},\Delta^{n,k}) \gets (\nabla F(w^{n-1}), w^{n-1} - \tilde{w}^{n,k})_{L^2}
		+ \eta R_\alpha(w^{n-1}) - \eta R_\alpha(\tilde{w}^{n,k})$
		\State $\ared(v^{n-1},\tilde{v}^{n,k}) \gets F(v^{n-1}) + \eta R_\alpha(w^{n-1})
		- F(\tilde{v}^{n,k}) - \eta R_\alpha(\tilde{w}^{n,k})$
		\If{$\pred(w^{n-1},\Delta^{n,k}) \le 0$}
		\State Terminate. The predicted reduction for $w^{n-1}$ is zero.
		\ElsIf{not sufficient decrease according to \eqref{eq:accept}}
		\State $k \gets k + 1$		
		\State $\Delta^{n,k} \gets \Delta^{n,k-1} / 2$.
		\Else
		\State $w^n \gets \tilde{w}^{n,k}$
		\EndIf
		\EndWhile	
		\EndFor
	\end{algorithmic}
\end{algorithm}

\subsection{Asymptotics of the trust-region algorithm}\label{sec:tr_alg_asymptotics}

With the results that have been established in the previous sections, the asymptotics of
\cref{alg:trm} can be analyzed by following the strategy from \cite{manns2023on},
which in turn is an extension of the analysis and ideas in \cite{leyffer2022sequential}.
We therefore only provide the information, where the proofs of the corresponding results
in \cite{manns2023on} require modification to match the situation of this work.
We begin with the proof of the asymptotics of the inner loop and continue
with the asymptotics of the outer loop.
\begin{proposition}[Corollary 6.3 in \cite{manns2023on}, Corollary 4.20 in \cite{leyffer2022sequential}]\label{prp:inner}
	Let $\alpha \in (0.5,1)$.
	Let \cref{ass:standing} hold. Let $w^{n-1}$ produced by \cref{alg:trm} 
	satisfy $\nabla F(w^{n-1}) \in C^2(\bar{\Omega})$. Then iteration
	$n$ satisfies one of the following outcomes.
	\begin{enumerate}
		\item The inner loop terminates after finitely many iterations and 
		\begin{enumerate}
			\item the sufficient decrease condition \eqref{eq:accept} is satisfied or
			\item the predicted reduction is zero (and the iterate $w^{n-1}$ is stationary
			for \eqref{eq:p}).
		\end{enumerate}
		\item The inner loop does not terminate and the iterate $w^{n-1}$ is stationary.
	\end{enumerate}
\end{proposition}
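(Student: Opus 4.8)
The plan is to follow the inner-loop analysis of \cite[Corollary 6.3]{manns2023on} and \cite[Corollary 4.20]{leyffer2022sequential}, adapting the key estimates to the fractional regularizer; the decisive new feature is that the displacement bound of \cref{lem:sym_diff_boundedness} scales like $|t|^\alpha$ rather than $|t|$, which is exactly where the hypothesis $\alpha > 0.5$ enters. First I would record the trichotomy built into \cref{alg:trm}. Since $w^{n-1}$ is itself feasible for $\text{\ref{eq:tr}}(w^{n-1},\nabla F(w^{n-1}),\Delta^{n,k})$ with objective value $0$, we always have $\pred(w^{n-1},\Delta^{n,k}) \ge 0$; hence the branch $\pred \le 0$ is triggered only when $\pred = 0$, in which case $w^{n-1}$ minimizes the subproblem and \cref{prp:tr_stationary} yields stationarity, giving outcome 1(b). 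If instead the acceptance test \eqref{eq:accept} holds at some inner iteration, we are in outcome 1(a). The remaining case, on which the real work concentrates, is that the inner loop never terminates, so that $\Delta^{n,k} = \Delta^0/2^k \searrow 0$ while $\pred(w^{n-1},\Delta^{n,k}) > 0$ and \eqref{eq:accept} fails for every $k$.

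The first estimate I would establish in this case is a Taylor bound. Expanding $F(\tilde w^{n,k})$ to second order around $w^{n-1}$ and invoking the $L^1$-estimate on the Hessian from \cref{ass:standing}, the regularizer terms cancel between $\ared$ and $\pred$ and one obtains
\[ \left|\ared(w^{n-1},\tilde w^{n,k}) - \pred(w^{n-1},\Delta^{n,k})\right| \le \tfrac{C}{2}\|\tilde w^{n,k} - w^{n-1}\|_{L^1}^2 \le \tfrac{C}{2}(\Delta^{n,k})^2. \]
Combining this with the failure of \eqref{eq:accept}, i.e.\ $\ared < \sigma\,\pred$, gives $(1-\sigma)\pred(w^{n-1},\Delta^{n,k}) < \tfrac{C}{2}(\Delta^{n,k})^2$, so that $\pred(w^{n-1},\Delta^{n,k}) = O((\Delta^{n,k})^2)$.

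To close the argument I would proceed by contradiction: suppose $w^{n-1}$ is \emph{not} stationary. Then there is $\phi \in C_c^\infty(\Omega;\R^d)$ for which the directional derivative $D(\phi)$ of the subproblem objective along $f_t^{\#}$ — the quantity whose vanishing for all $\phi$ is the stationarity condition \eqref{eq:stationarity} — is nonzero, and since $D(\phi)$ is linear in $\phi$ we may assume $D(\phi) < 0$. For the local variation $f_t = I + t\phi$ the transported iterate $f_t^{\#}w^{n-1}$ is always $W$-valued, and \cref{lem:sym_diff_boundedness} gives $\|f_t^{\#}w^{n-1} - w^{n-1}\|_{L^1} \le C'|t|^\alpha$, so the choice $|t| = (\Delta^{n,k}/C')^{1/\alpha}$ keeps it inside the trust region. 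Then \cref{prp:variation_linearized_objective} (which requires $\nabla F(w^{n-1}) \in C^2(\bar\Omega)$, as assumed) and \cref{prp:variation_fractional_perimeter} expand the subproblem objective at $f_t^{\#}w^{n-1}$ as $t\,D(\phi) + o(t)$; hence for $\Delta^{n,k}$ small this value is at most $\tfrac12 t\,D(\phi) < 0$, whence $\pred(w^{n-1},\Delta^{n,k}) \ge \tfrac12|D(\phi)|(\Delta^{n,k}/C')^{1/\alpha} = c\,(\Delta^{n,k})^{1/\alpha}$.

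The main obstacle — and the reason $\alpha > 0.5$ is indispensable — is the clash of the two resulting exponents: non-termination forces $\pred \lesssim (\Delta^{n,k})^2$ while non-stationarity forces $\pred \gtrsim (\Delta^{n,k})^{1/\alpha}$. Since $\alpha > 0.5$ yields $1/\alpha < 2$, we have $(\Delta^{n,k})^{1/\alpha - 2} \to \infty$ as $\Delta^{n,k} \searrow 0$, which is impossible; therefore $w^{n-1}$ must be stationary, establishing outcome 2. I expect the delicate points to be checking that the $o(t)$ remainders in the two variation formulas are uniform enough to survive the substitution $|t| = (\Delta^{n,k}/C')^{1/\alpha}$ as $k \to \infty$, and confirming that $D(\phi)$ is genuinely linear in $\phi$ so that its sign may be flipped to arrange $D(\phi) < 0$.
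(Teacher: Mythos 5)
Your proposal is correct and follows essentially the same route as the paper, which simply defers to Corollary 6.3 / Lemmas 6.1--6.2 of \cite{manns2023on} with the substitutions you reconstruct: the quadratic Taylor bound $|\ared-\pred|\le \tfrac C2(\Delta^{n,k})^2$ from \cref{ass:standing}, the competitor $f_t^{\#}w^{n-1}$ built from a local variation violating \eqref{eq:stationarity}, and the displacement bound of \cref{lem:sym_diff_boundedness}. Your exponent clash $(\Delta^{n,k})^{1/\alpha}$ versus $(\Delta^{n,k})^2$ is exactly the paper's remark that $(\varepsilon^k)^{2\alpha}$ is dominated by $\varepsilon^k$ when $\alpha>0.5$, just written in the trust-region-radius parametrization $\Delta\sim |t|^\alpha$ instead of in $t$.
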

\begin{proof}
The proof follows as in  Corollary 6.3 with the major steps of the proof being Lemma 6.1 and Lemma 6.2 in \cite{manns2023on},
where the violation of \emph{L-stationarity} in Lemma 6.2
 is replaced by a violation of \eqref{eq:stationarity} and the $\TV$-term is
 replaced by $R_\alpha$.
The roles of Lemma 3.3, Lemma 3.5, and Proposition 5.5 in \cite{manns2023on} are taken by
\cref{prp:variation_fractional_perimeter,prp:variation_linearized_objective,prp:minimizers_are_stationary}.
The role of Lemma 3.8 in \cite{manns2023on} is taken by \cref{lem:sym_diff_boundedness}.
It leads to the term $(\varepsilon^k)^{2\alpha}$ instead of $(\varepsilon^k)^2$ in the proof of Lemma 6.2,
which is still dominated by $\varepsilon^k \eta$ for $\varepsilon^k \searrow 0$ if $\alpha \in (0.5,1)$
as assumed.
\end{proof}

\begin{theorem}[Theorem 6.4 in \cite{manns2023on}, Theorem 4.23 in \cite{leyffer2022sequential}]\label{thm:pure_tr_asymptotics}
	Let $\alpha \in (0.5,1)$.
	Let \cref{ass:standing} hold. Let the iterates $(w^n)_n$ be produced by \cref{alg:trm}.
	Let $\nabla F(w^n) \in C^2(\bar{\Omega})$ for all $n \in \N$.
	Then all iterates are feasible for \eqref{eq:p} and 
	the sequence of objective values $(J_\alpha(w^n))_n$
	is monotonically decreasing.
	Moreover, one of the following mutually exclusive outcomes holds:
	\begin{enumerate}
		\item\label{itm:finite_seq_tr} The sequence $(w^n)_n$ is finite.
		The final element $w^N$ of $(w^n)_n$ solves the
		trust-region subproblem $\text{\emph{\ref{eq:tr}}}(w^N,\nabla F(w^N),\Delta)$
		for some $\Delta > 0$ and is stationary for \eqref{eq:p}.
		\item\label{itm:finite_seq_tr_contract} The sequence 
		$(w^n)_n$ is finite and the inner loop does not terminate
		for the final element $v^N$, which is stationary for \eqref{eq:p}.
		\item\label{itm:infinite_seq_sl} The sequence $(w^n)_n$ has a
		pseudo-weak-$^*$ accumulation
		point in $\calF$. Every pseudo-weak-$^*$ accumulation point of $(w^n)_n$
		is feasible, and strict. If $w$ is a pseudo-weak-$^*$ accumulation point of
		$(w^n)_n$ that satisfies $\nabla F(w) \in C^2(\bar{\Omega})$, then it is stationary
		for \eqref{eq:p}.
		
		If the trust-region radii are bounded away from zero for a subsequence $(w^{n_\ell})_\ell$, that is,
		if $0 < \underline{\Delta} \coloneqq \liminf_{n_\ell\to\infty} \min_{k}\Delta^{n_{\ell}+1,k}$
		and $\bar{w}$ is a pseudo-weak-$^*$ accumulation point of $(w^{n_\ell})_\ell$ with $\nabla F(\bar{w}) \in C(\bar{\Omega})$,
		then $\bar{w}$ solves $\text{\emph{\ref{eq:tr}}}(\bar{w},\nabla F(\bar{w}),\underline{\Delta} / 2)$.
	\end{enumerate}
\end{theorem}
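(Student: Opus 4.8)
The plan is to mirror the proof of Theorem 6.4 in \cite{manns2023on}, substituting the total variation by $R_\alpha$ and feeding in the fractional counterparts of the perturbation estimates, namely \cref{prp:variation_fractional_perimeter,prp:variation_linearized_objective,lem:sym_diff_boundedness}. First I would record the two global structural facts. Feasibility of every $w^n$ is inherited by induction from the feasibility constraints imposed in the trust-region subproblem. Monotonicity follows because the linearization point $w^{n-1}$ is itself feasible for $\text{\ref{eq:tr}}(w^{n-1},\nabla F(w^{n-1}),\Delta^{n,k})$ with objective value $0$, so the predicted reduction is nonnegative, and the acceptance test \eqref{eq:accept} then yields $\ared(w^{n-1},w^n)\ge \sigma\,\pred(w^{n-1},\Delta^{n,k})\ge 0$. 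Since $F$ is bounded below by \cref{ass:standing} and $R_\alpha\ge 0$, the values $J_\alpha(w^n)$ decrease to a finite limit $J^\ast$; telescoping $\sum_n \ared(w^{n-1},w^n)=J_\alpha(w^0)-J^\ast<\infty$ together with \eqref{eq:accept} gives summability of the accepted predicted reductions, and in particular the accepted predicted reductions tend to zero.

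The two finite outcomes \ref{itm:finite_seq_tr} and \ref{itm:finite_seq_tr_contract} are then immediate from \cref{prp:inner}: termination occurs either because the predicted reduction vanishes, in which case the final iterate minimizes its own trust-region subproblem and is stationary by \cref{prp:tr_stationary}, or because the inner loop fails to terminate, in which case \cref{prp:inner} already certifies stationarity.

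For the infinite outcome \ref{itm:infinite_seq_sl} I would argue as follows. The bound $\sup_n R_\alpha(w^n)<\infty$, a consequence of $J_\alpha(w^n)\le J_\alpha(w^0)$ and $F$ bounded below, furnishes via the compactness cited in the proof of \cref{prp:existence} (see \cite[Corollary 4.6, Proposition 4.8]{comi2019distributional}) a subsequence converging in $L^1(\Omega)$; passing to a further subsequence with pointwise a.e.\ convergence shows the limit is $W$-valued, hence a pseudo-weak-$^*$ accumulation point $w\in\calF$ exists. Along such a subsequence $w^{n_k}\to w$ the $L^\infty$-bounds give $w^{n_k}\to w$ in $L^2(\Omega)$, continuity of $F$ yields $F(w^{n_k})\to F(w)$, and since $J_\alpha(w^{n_k})\to J^\ast$ the regularizer values converge to $(J^\ast-F(w))/\eta$. \Cref{lem:Ralpha_lsc} gives $R_\alpha(w)\le (J^\ast-F(w))/\eta$, and the matching upper bound $\limsup_k R_\alpha(w^{n_k})\le R_\alpha(w)$, which is the crux of the strictness claim, is obtained by adapting the corresponding step in \cite{manns2023on}: the summability of the accepted predicted reductions, together with the second-order control from \cref{ass:standing} and \cref{lem:sym_diff_boundedness} (this is where $\alpha>0.5$ is used, ensuring the $|t|^{2\alpha}$ remainder is dominated by the first-order term), lets one convert the near-optimality of $w^{n_k}$ in its subproblem into the desired bound on the regularizer. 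With strictness in hand, stationarity of any accumulation point $w$ with $\nabla F(w)\in C^2(\bar\Omega)$ follows by contradiction: a violation of \eqref{eq:stationarity} produces, through \cref{prp:variation_fractional_perimeter,prp:variation_linearized_objective}, a local variation with strictly negative directional derivative of $J_\alpha$, which by the same estimates underlying \cref{prp:inner} yields a uniform positive predicted reduction near $w$, contradicting $\pred\to 0$.

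Finally, the bounded-away case is a direct application of the $\Gamma$-convergence of the subproblems. Suppose $\underline{\Delta}=\liminf_{\ell\to\infty}\min_k\Delta^{n_\ell+1,k}>0$ and let $\bar w$ be a pseudo-weak-$^*$ accumulation point of $(w^{n_\ell})_\ell$ with $\nabla F(\bar w)\in C(\bar\Omega)$. Since the optimal value of a trust-region subproblem is nonincreasing in the radius, and the accepted radii are eventually at least $\underline{\Delta}\ge\underline{\Delta}/2$ while the accepted predicted reductions tend to zero, the optimal values of the subproblems $T^n$ formed at the linearization points $w^{n_\ell}$ with the fixed radius $\underline{\Delta}/2$ are squeezed to zero. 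These $T^n$, with $w^{n_\ell}\to\bar w$ strictly and $\nabla F(w^{n_\ell})\weakto\nabla F(\bar w)$, $\Gamma$-converge to the functional $T$ of $\text{\ref{eq:tr}}(\bar w,\nabla F(\bar w),\underline{\Delta}/2)$ by \cref{thm:tr_gamma_convergence}, so $\min T=\lim_\ell\min T^n=0$; since $T(\bar w)=0$, this shows $\bar w$ minimizes $T$, i.e.\ solves $\text{\ref{eq:tr}}(\bar w,\nabla F(\bar w),\underline{\Delta}/2)$. The main obstacle throughout is the strictness of accumulation points: it is the only place where the missing weak-$^*$ topology of $W^{\alpha,1}(\R^d)$ must be circumvented, and the only place where the restriction $\alpha>0.5$ is genuinely needed, via \cref{lem:sym_diff_boundedness}.
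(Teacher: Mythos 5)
Your proposal is correct and follows essentially the same route as the paper, which itself only sketches the argument as a list of substitutions into the proof of Theorem 6.4 in \cite{manns2023on}: the same four-part treatment of Outcome 3 (existence/feasibility of accumulation points via the compactness from \cref{prp:existence}, strictness, the $\Gamma$-convergence argument via \cref{thm:tr_gamma_convergence} for the bounded-radius case, and the contradiction argument via \cref{prp:variation_fractional_perimeter,prp:variation_linearized_objective,lem:sym_diff_boundedness} for the vanishing-radius case). The only small inaccuracy is your closing remark that $\alpha>0.5$ is needed \emph{only} for strictness; in the paper it is invoked chiefly so that the $|t|^{2\alpha}$ second-order remainder is dominated by the first-order term in the stationarity arguments (\cref{prp:minimizers_are_stationary}, \cref{prp:inner}, and the choice of $\Delta^*$ in part (4)).
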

\begin{proof}
As in the proofs of Theorem 6.4 in \cite{manns2023on} and Theorem 4.23 in \cite{leyffer2022sequential}
it follows that \cref{alg:trm} produces a sequence of feasible iterates $(w^n)_n$ with corresponding
montonotically decreasing sequence of objective function values $(J_\alpha(w^n))_n$. Again, as in
the proofs of Theorem 6.4 in \cite{manns2023on} and Theorem 4.23, it suffices to prove
Outcome \ref{itm:infinite_seq_sl}
in case that Outcomes \ref{itm:finite_seq_tr} and \ref{itm:finite_seq_tr_contract} do not hold.
In this argument \cref{prp:inner} takes the role of Lemma 6.2 in \cite{manns2023on} and Lemma 4.19
in \cite{leyffer2022sequential}. As in \cite{manns2023on}, we consider four steps of the proof that
Outcome \ref{itm:infinite_seq_sl} holds into four parts.
	
\textbf{Outcome \ref{itm:infinite_seq_sl} (1) existence and feasibility of pseudo-weak-$^*$
accumulation points:} This follows with the same arguments as are carried out for the existence
of minimizers in \cref{prp:existence}.

\textbf{Outcome \ref{itm:infinite_seq_sl} (2) pseudo-weak-$^*$ accumulation points
	are strict:} This follows with the same arguments as are carried out
	in the corresponding paragraph in the proof of Theorem 6.4 in \cite{manns2023on}.
	The only change is that the $\TV$-term is replaced by the $R_\alpha$.

\textbf{Outcome \ref{itm:infinite_seq_sl} (3) strict accumulation points
	are optimal for \eqref{eq:tr} if the trust-region is bounded away
	from zero:} This follows with the same arguments as are carried out
in the corresponding paragraph in the proof of Theorem 6.4 in \cite{manns2023on}
when the role of Theorem 5.2 in \cite{manns2023on} is taken by
\cref{thm:tr_gamma_convergence} and the role of Proposition 5.5 in
\cite{manns2023on} is taken by \cref{prp:minimizers_are_stationary}.

\textbf{Outcome \ref{itm:infinite_seq_sl} (4) strict accumulation points
	are stationary if the trust-region radius vanishes:}
This follows with the same arguments as are carried out
in the corresponding paragraph in the proof of Theorem 6.4 in \cite{manns2023on}
with the following adaptions. The assumed violation of \emph{L-stationarity} in
Theorem 6.4 is replaced by a violation of \eqref{eq:stationarity}
and the $\TV$-term is replaced by $R_\alpha$. The roles of
Lemmas 3.3 and 3.5 in \cite{manns2023on} are taken by
\cref{prp:variation_fractional_perimeter,prp:variation_linearized_objective}.
For the conditions (a), (b), and (c) on the choice of $\Delta^*$ in
Theorem 6.4, we replace
$\varepsilon_1$ by $\varepsilon_1^\alpha$ in (a) and the two occurrences
of $\Delta^* \kappa^{-1}$ by $(\Delta^* \kappa^{-1})^{\frac{1}{\alpha}}$
in order to account for the exponent $\alpha$ in the estimate
\cref{lem:sym_diff_boundedness}.
Note that the non-negativity in (b) can be achieved because 
of the assumption $\alpha > \frac{1}{2}$, which implies that
$(\Delta^*)^{\frac{1}{\alpha}}$ dominates $(\Delta^*)^2$ for $\Delta^* \searrow 0$.
Moreover, below (a), (b), and (c) we choose
$t$ as $\left(\tfrac{\Delta^*}{2\kappa}\right)^{\frac{1}{\alpha}}$
instead of $\tfrac{\Delta^*}{2\kappa}$. Then the remaining steps can be carried out
as in the proof of Theorem 6.4 in \cite{manns2023on}.
\end{proof}

\section{Computational experiment}\label{sec:computational}
We provide a computational example to give a qualitative impression of
the behavior of the resulting discretization.
We consider the binary control of an elliptic boundary value problem by means of
a source term that enters the right hand side of the PDE. The main objective
is a tracking-type functional so that the problems \eqref{eq:p} become
\begin{gather}\label{eq:pc}
\begin{aligned}
\minimize_{u, w} \enskip & \frac{1}{2}\|u - u_d\|_{L^2}^2
+ \eta R_\alpha(w)\\
\st\enskip & - \nu \Delta u = w\text{ in } \Omega,\quad u|_{\partial \Omega} = 0,\\
& w(x) \in W \coloneqq \{0,1\} \subset \Z \text{ a.e.\ in } \Omega.
\end{aligned}
\tag{P$_\alpha$}
\end{gather}
We choose $\Omega = (0,1)\times (0,1)$, $u_d \in L^2(\Omega)$ such that it
cannot be realized, implying the first term in the objective is bounded
below away from zero. Moreover, we choose $\nu= \tfrac{1}{25}$ and $\eta = 5\cdot 10^{-5}$.
We discretize the problem and run the trust-region algorithm for
the choices $\alpha = 0.5$, $\alpha = 0.9$, and for the limit with $R(w) = \omega_{d-1} P(E_1)$.
In all cases, we initialize the algorithm with $w^0 = 0$.

In order to discretize the problem, we choose a piecewise constant ansatz
for the control input $w$ on a uniform grid of squares of size $n \times n$
with $n = 48$. For the limit case, an isotropic discretization of the total
variation seminorm in integer optimization is computationally difficult and
recent approaches \cite{schiemann2024discretization} are not computationally
mature enough so far. Therefore, we compute the total variation as the length
of the interfaces along the boundaries of the grid cells, which implies
an anisotropic behavior in the limit. We then obtain integer linear programs for
the trust-region subproblems as derived in Appendix B of \cite{manns2023on}.
We expect that rectangular shapes are preferred for this anisotropic discretization.
For the choices $\alpha = 0.5$, $\alpha = 0.9$, we tabulate the possible
contributions of pairs of different cells to the double integral \eqref{eq:frac_perim_as_one_double_integral}
and formulate the resulting trust-region subproblems as integer linear
programs. Due to the double integral, this amounts to a number of variables
in the order of $n^4$, which is too much for standard integer programming
solvers to handle easily. To alleviate this issue, we limit the contributions that
are taken into account in the inner integral so that
only cells whose center is within an $\ell_2$-distance of at most
$7 \frac{1}{n}$ to the center of a cell in the outer integral are taken
into account in the inner integral. This is justified by the decay
of the integral kernel of the Gagliardo seminorm with distance to the current
point. For the limit case, we compute the limiting regularizer as the
sum of the interface lengths multiplied by their jump heights as in
\cite{manns2023on}.

We discretize the PDE using the open source library 
FEniCSx\footnote{\url{https://fenicsproject.org/}} 
\cite{ScroggsEtal2022,BasixJoss,AlnaesEtal2015}, where we use a much
finer mesh than $48\times 48$ cells to solve the PDE
and solve the subproblems using the integer programming solver
Gurobi\footnote{\url{https://www.gurobi.com/}} \cite{gurobi}.
We run the problems in single CPU mode on one node of TU
Dortmund's Linux HPC cluster LiDO3 (node: 
2x AMD EPYC 7542 32-Core CPUs and 1024 GB RAM).
Even with our relatively coarse discretization and the approximation of
the inner integral, many of the subproblems for $\alpha = 0.5$ and
$\alpha = 0.9$ are very expensive, a lot of time to solve and the computations
take almost three weeks (with several subproblems being solved inexactly
because they did not solve to global optimality within 30 hours).
The limit case is solved in a couple of hours.

This high computational burden shows that more work is
necessary to solve these structured integer linear programs efficiently
and approximate the involved integrals sensibly. 
For $\alpha = 0.5$, the trust-region algorithm accepts 31 steps until
the trust-region radius contracts, that is it drops below the volume
of one cell in the control grid.
For $\alpha = 0.9$, the trust-region algorithm accepts 28 steps until
the trust-region radius contracts.
For the limiting case, the trust-region algorithm accepts 36
until the trust-region radius contracts.

The limiting case shows an anisotropic behavior. This is expected because
the geometric restriction induced by the control function ansatz
implies that all interface lengths (jump height multiplied by
length) are computed along the boundaries of the discretization
into squares, see also the comments at the end of section 2
in \cite{severitt2023efficient}. 
This is alleviated for $\alpha = 0.5$ and $\alpha = 0.9$ and a less
anisotropic behavior can be observed. The three resulting controls
at the respective final iterations are shown in \cref{fig:shapes}.
\begin{figure}
	\centering
	\begin{subfigure}{.33\textwidth}
		\centering
		\includegraphics[width=\linewidth]{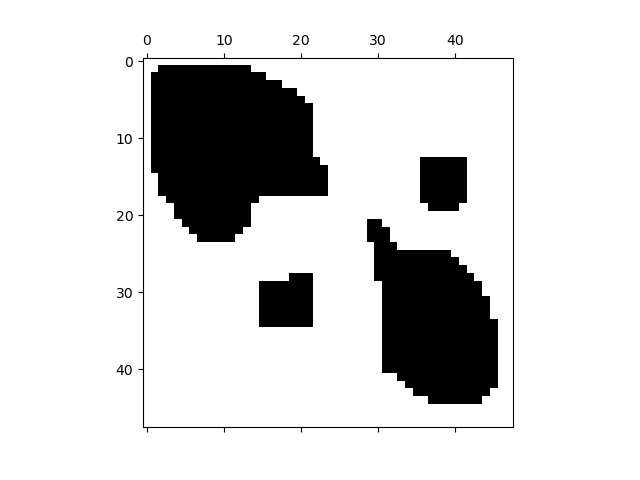}
		\caption{$\alpha = 0.5$}
	\end{subfigure}%
	\begin{subfigure}{.33\textwidth}
		\centering
		\includegraphics[width=\linewidth]{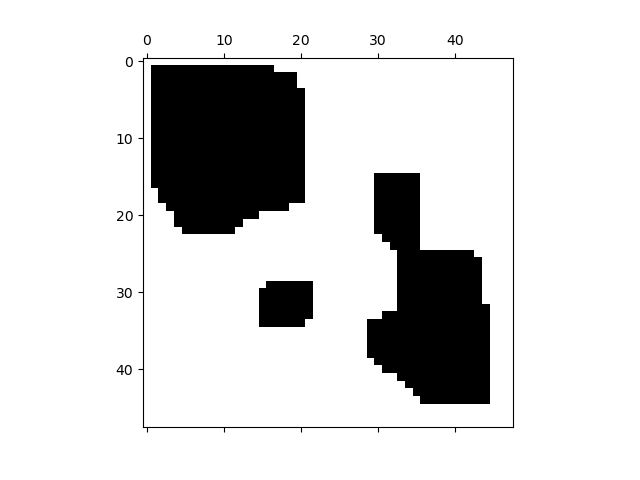}
		\caption{$\alpha = 0.9$}		
	\end{subfigure}%
	\begin{subfigure}{.33\textwidth}
		\centering
		\includegraphics[width=\linewidth]{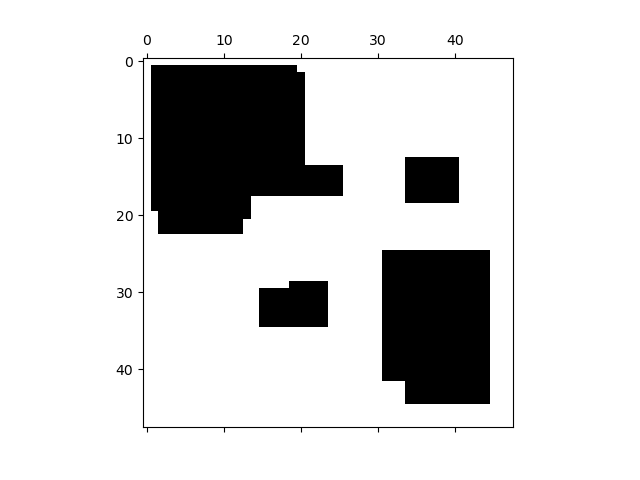}
		\caption{``$\alpha = 1$'' (perimeter)}		
	\end{subfigure}
	\caption{Resulting shapes for the different values of $\alpha$
	at the final iteration of the trust-region algorithm
 	executed using discretized subproblems.}
	\label{fig:shapes}
\end{figure}
We also note that when running the experiments again (where parallelization in
Gurobi is turned on), slightly different results are returned because the integer problems
are numerically challenging and we are at the limits of what Gurobi can handle
so that several similar integer configurations are within the tolerances and
the outcome is not deterministic. We have also observed that this leads
our trust-region algorithm to follow slightly different paths and contract
at different stationary points. This highlights even more that more work
is necessary to solve the subproblems efficiently to global optimality (or
a constant factor approximation). 

\section*{Conclusion}
Our theoretical analysis opens a sensible way of approaching the computationally 
difficult approximation of the anisotropic total variation in contexts
with discreteness restrictions on the variables and discretizations with fixed
geometries. Specifically, we have approximated the boundary integral by a double
volume integral, where the approximation properties are carried out by means of
the fractional nonlocal perimeter.
Like other recent steps in this direction, our computational experiments
show that we end up with a computationally very challenging problem, which needs to
be understood and scaled to meaningful problem sizes in the future.

\section*{Acknowledgments}
The authors gratefully acknowledge computing time on the LiDO3 HPC cluster at TU Dortmund, partially
funded in the Large-Scale Equipment Initiative by the Deutsche Forschungsgemeinschaft (DFG) as project
271512359. The authors like to thank an anonymous referee for valuable feedback on the manuscript.

\bibliography{references}{}
\bibliographystyle{abbrv} 

\section*{Statements and Declarations}
\subsection*{Funding}
Harbir Antil H. Antil is partially supported by NSF grant DMS-2110263, Air Force Office of Scientific Research (AFOSR) under Award NO: FA9550-22-1-0248, and Office of Naval Research (ONR) under Award NO: N00014-24-1-2147.
Paul Manns is partially supported by Deutsche Forschungsgemeinschaft (DFG) under
project no.\ 515118017.

\subsection*{Competing Interests}
The authors have no relevant financial or non-financial interests to disclose.

\end{document}